\newtheorem{theorem}{Theorem}[section]
\newtheorem{proposition}[theorem]{Proposition}
\theoremstyle{definition}
\newtheorem{example}[theorem]{Example}
\newcommand{\julio}[1]{\ifthenelse{\boolean{showcomments_author}}
        { \textcolor{red}{(JB:  #1)}}{}}
\newcommand{\brian}[1]{\ifthenelse{\boolean{showcomments_author}}
        { \textcolor{cyan}{(BL:  #1)}}{}}
\newcommand{\changed}[1]{\ifthenelse{\boolean{showcomments}}
        { \textcolor{blue}{#1}}{#1}}
\begin{document}

\title{\huge Robust Dynamic Operating Envelopes for DER Integration in Unbalanced Distribution Networks}

\author{Bin Liu,~\IEEEmembership{Member,~IEEE},~Julio H. Braslavsky,~\IEEEmembership{Senior Member,~IEEE}
	\thanks{The authors would like to thank the support of the CSIRO Strategic Project on Network Optimisation \& Decarbonisation under Grant Number: OD-107890.}
	\thanks{Bin Liu (\textit{corresponding author}) was with Energy Systems Program, Energy Centre, CSIRO, Mayfield West 2304, Australia. He is now with the Network Planning Division, Transgrid, Sydney 2000, Australia (eeliubin@ieee.org). 
 
    Julio H. Braslavsky is with Energy Systems Program, Energy Centre, CSIRO, Mayfield West 2304, Australia (julio.braslavsky@csiro.au).}
	\thanks{Manuscript received December 2022 and revised May 2023.}}

\markboth{Submission to the IEEE Transactions on Power Systems, \today}%
{Liu and Braslavsky: Robust Dynamic Operating Envelopes for DER Integration in Unbalanced Distribution Networks}

\maketitle

\begin{abstract}                                                                                                       
  Dynamic operating envelopes (DOEs) have been introduced in recent years as a means to manage the operation of distributed energy resources (DERs) within the network operational constraints. DOEs can be used by network operators to communicate DER dispatchable capacity to \changed{aggregators or customers} without further consideration of network constraints and are thus viewed as a key enabler for demand-side participation in future electricity markets and for ensuring the integrity of distribution networks. While a number of approaches have been developed to calculate DOEs, uncertainties in system data are typically ignored, which can lead to unreliable results and introduce security risks in network operations. This paper presents a deterministic procedure to calculate robust DOEs (RDOEs) explicitly hedged against uncertainty and variability in customers' loads and generations. The approach is based on a geometric construction strictly included within the feasible region of a linear unbalanced three-phase optimal power flow problem that specifies the network operational constraints. The paper analyses and rigorously shows that the proposed approach also delivers proportional fairness in capacity allocations, and demonstrates how the RDOEs can be enlarged by (i) exploiting the knowledge of customer operational statuses and (ii) by optimising customers' controllable reactive powers. \changed{The efficiency and compliance of the proposed approach are characterised and discussed for three numerical case studies of varying complexity: a 2-bus conceptual distribution network, a 33-bus real-world Australian representative low-voltage distribution network, and a 132-bus low-voltage distribution network synthetically constructed by extending the 33-bus network model}.
\end{abstract}

\begin{IEEEkeywords}
	DER integration; Motzkin Transposition Theorem; flexibility, dynamic operating envelopes; optimal power flow; proportional fairness; unbalanced distribution systems.
\end{IEEEkeywords}

\section{Introduction}\label{sec-intro}
\IEEEPARstart{D}{istributed} energy resources (DERs), such as rooftop photovoltaic (PV) systems, home batteries, electric vehicles (EVs) and other dispatchable loads, have been steadily growing in power networks around the world. In Australia, an estimated 3.04 million homes and businesses have installed rooftop PV systems by 2021, with more than 2.88 GW in 349,000 systems installed in the year 2021 alone \cite{AustralianEnergyCouncil2020}. 

While DERs drive power system decarbonisation and lower energy costs, they have introduced operational challenges, especially in distribution networks where they are predominantly connected. These challenges typically include power quality issues due to increased voltage unbalance and dynamic range, increased power losses, and accelerated ageing of distribution transformers due to more frequent overloading events \cite{Review-01,Review-02}. \changed{The accepted model in the Open Energy Networks Project in Australia posits that many of these challenges can be efficiently addressed through DER integration strategies based on careful coordination of transmission network operators (TSOs), distribution network operators (DSOs), and emerging DER aggregators and market operators \cite{EnergyNetworksAustralia2020}}.

Dynamic operating envelopes (DOEs) were introduced in 2018  as a key enabler of such integration strategies in Australia \cite{OpEN:response} and have since gained increasing support within the industry \cite{DEIP2022,Petrou2021,Riaz2022,CutlerMerz_DOE}.  As the uptake of DERs continues to grow, the transition to DOEs has become the prevalent response from DSOs in a historic change in customer electricity connection models \cite{DEIP2022,CutlerMerz_DOE}.
DOEs specify the ranges for DER power consumption and generation permissible within the network operational limits at a given point in time and location \cite{Petrou2021,Riaz2022}. \changed{DOEs can be used by DSOs to manage DER operation and communicate via the IEEE 2030.5 protocol real-time network capacity information to DER aggregators or customers \cite{IEEE20305trial}, who would be able to provide DER demand-side network services in electricity markets without further consideration of distribution network constraints \cite{OpEN:response,ESB_DER}}. A framework for  DER market integration through DOEs adopted from \cite{EnergyNetworksAustralia2020} is currently under trial demonstration in Australia in projects EDGE \cite{project_edge} and Symphony \cite{project_symphony}.

\changed{DOEs may be seen as a real-time form of \emph{hosting capacity} aimed at capturing changes in the network state at relatively short timescales that depend on operational requirements and the availability of network data.
  However, there is a substantial difference in that determining DER hosting capacity is typically a \emph{planning} problem that seeks to quantify the capacity of the network to \emph{connect new DER} under investment scenarios of interest, predominantly via offline calculations \cite{hc-23,hc-14,hc-new-02,hc-new-08,Koirala2022}. In contrast, DOEs aim to support the \emph{operation} of a system with existing DER by periodically estimating and allocating local network latent capacity at many consumers' DER connection points in near real-time (every 5 minutes for updating PV export limits in some trials in Australia) to 30-minute intervals over forecasting horizons of a few hours to a few days ahead \cite{DEIP2022, Liu2021-doe}. The calculation and communication of DOEs is a non-trivial multi-period problem with stringent requirements for accuracy, scalability and timeliness.}

  Several DOE calculation approaches have been developed in recent years.  A common approach is based on exact unbalanced three-phase power flow (UTPF) calculations \cite{Blackhall2020,project_edge,project_symphony,BL_ieee_access}. This approach typically uses the knowledge of the network parameters, the voltage at a reference bus, and estimated load profiles for \emph{passive} customers (those with fixed connections), to calculate DOEs for \emph{active} customers (those with flexible connections managed by DOEs) by gradually increasing export/import power levels for the latter until a violation of an operational network constraint (e.g., maximum regulatory voltage magnitude) is detected.  Although this approach has been adopted in Australian flagship projects \cite{project_edge_alg,project_symphony_alg}, it involves many iterations and may lead to low efficiency in available network capacity utilisation, especially when all customers are allocated the same DOE.

Approaches based on unbalanced three-phase \emph{optimal} power flow (UTOPF) calculations have been investigated to improve capacity utilisation \cite{Petrou2021,Liu2022_doe}. Here the formulation of a suitable optimisation objective function with the same data inputs of an UTPF-based approach seeks to optimally allocate the permissible operational limits underlying DOEs subject to meeting operational constraints. Arguably a more rigorous and flexible alternative, the UTOPF-based approach, however, can be computationally demanding and harder to implement in large-scale problems due to potential convergence difficulties.  Linear UTOPF models have been considered to mitigate these computational scalability issues at the expense of computation accuracy \cite{Petrou2021,Liu2022_doe}.  Indeed, since linear UTOPF is an \emph{approximation}  commonly obtained by linearising around an operating point \cite{Franco2018,Liu2022_doe,BL-isgt-asia} or by neglecting power losses \cite{bfm_threephase}, model errors become more significant when the true operating point deviates from the operating point used in the linearisation, or when dealing with resistive networks with considerable power losses.

To circumvent the UTPF and UTOPF calculation requirement for accurate electrical network models, often unavailable for distribution networks, an alternative ``electrical model-free'' heuristic DOE calculation approach based on machine learning (ML) modelling techniques have been recently proposed for networks with high observability of consumer connection data \cite{Bassi2022,DEIP2022}. This ML-based approach exploits the access to network-wide smart meter data coverage to fit black-box models that can be used to predict maximum and minimum voltages over a range of capacity allocations at each customer connection point. Reported initial studies have shown promising results, though further analysis is undergoing to validate the approach more generally \cite{Bassi2022}.

\changed{Irrespective of calculation method used, the determination of DOEs at a customer connection points requires realiable individual customer demand and generation data. These are commonly approximated based on mean profiles constructed from historic metering data (typically omitting reactive power and voltage information) or postulated based on customer statistical distributions \cite{Urquhart2013}.  Such representation of customer electric data does not necessarily produce accurate or even feasible network states to initialise DOE calculations, which can lead to unnecessary DER curtailment. State estimation for distribution systems \cite{Krause_LehnHoff_2012,Vanin20222075} appears as a rigorous technique to manage such data issues and has already been used as a data preconditioning stage for DOE calculations in recent trials \cite{SE_DOE,shield22}.}

An inevitable challenge, however, remains in DOE calculations: the uncertainty inherent in predicting true utilisation of DOEs by active customers, which is exacerbated by errors in forecasting demand and generation for passive customers, and errors in network data and models underlying the predictions. The impacts of DOE uncertainty on DOE applications are yet to be fully examined and understood, but it has been observed that deterministically calculated DOEs without consideration of variability in customer demand and generation cannot guarantee that network constraints will not be violated \cite{Yi2022 ,BL_ieee_access}.

This observation disputes a commonly unstated assumption that operational constraint violations would be avoided if the realised powers from active customers are within the allocated DOEs while other input information stays unchanged  \cite{project_edge,DEIP2022}. The assumption is critical to the validity of DOEs as a reliable instrument to manage DERs but, while it could be justified for a balanced network with radial topology when all customers are exporting or importing powers, it does not apply under strong unbalance and mutual phase couplings common in low-voltage distribution networks \cite{BL_ieee_access}.
\changed{While DSOs could conservatively limit network capacity allocated to DER simply by adopting a ``buffer zone'' \cite{Petrou2021},
  shortcuts in DOE calculation procedures such as this can impose unnecessary DER curtailment to the detriment of renewable energy integration and DER participation in flexibility service markets. On the other hand, DOEs that disregard uncertainties altogether could lead to over-optimistic allocations that introduce security and reliability risks to network operation. The efficient and fair management of consumer and network resources entails deliberate trade-offs in capacity allocation, which are arguably best supported by a systematic treatment of uncertainty in the determination of DOEs. }

This paper presents a deterministic procedure to calculate robust DOEs (RDOEs) explicitly hedged against uncertainty in active customers' utilisation of their allocated load and generation capacities.
The key idea underlying the proposed approach is the computation of RDOEs that span a hyperrectangle strictly included within the \emph{feasibility region (FR)} of a linear UTOPF problem determined by the network operational constraints. By construction, the capacity ranges allocated to all active customers along the edges of this hyperrectangle are decoupled from each other, thus defining reliable limits for DER operation that are independent of variations in load and generation at other connection points. Furthermore, we show that the proposed approach builds in proportional fairness of capacity allocation amongst customers in the sense of \cite{Kelly1997}.

The proposed RDOEs embed conservatism in that the allocated capacities are not the maximum feasible, which comes as a result of an unavoidable trade-off between design robustness and utility --- curtailment in performance is the price paid for robustness. This trade-off has also been illustrated in \cite{Yi2022} in fair DOE calculations using a chance-constrained OPF approach for balanced networks.  Two extensions are developed in the present paper to mitigate conservatism in the proposed RDOE approach, namely: (1) a formulation that exploits knowledge of customer operational statuses as a mix of importing and exporting power modes, instead of the commonly assumed worst-case scenario in DOE calculation where all active customers have the same operational status \cite{Liu2022_doe}; and (2) a formulation to optimise customers' controllable reactive powers, as suggested in \cite{Ochoa2022_Q}, by treating them as additional resources (see also \cite{Riaz2022}).

\changed{Note that while the notion of RDOEs discussed in this paper provides for DOE solutions hedged against uncertainty, as done more generally in the discipline of robust mathematical optimisation (RO) \cite{Ben-Tal2009,Bertsimas2022}, the specific approaches used to compute RDOEs are not necessarily RO methods.  RO methodologies may be employed for calculating RDOEs when uncertainties exist in forecasting loads and generations of passive customers and/or line impedances, as discussed in \cite{liu2022robust_02}. Yet when uncertainties arise solely from load variations of active customers, a deterministic procedure to compute RDOEs may be developed directly from geometrical considerations, as done in the present paper.}

 The main contributions of the paper are summarised as follows:
\begin{enumerate}
\item We introduce a formalism for RDOEs whereby allocated capacity limits are immune to variability in load or generation of other active customers within their allocated limits, and present an efficient deterministic procedure to construct RDOEs based on a linear UTOPF formulation.
\item We present an extension to expand the proposed RDOEs by exploiting knowledge of active customer operational statuses as a mix of importing and exporting power modes.
\item We present another extension to expand the proposed RDOEs by optimising controllable reactive powers from customers while preserving robustness.
\item We provide rigorous analysis that shows that the proposed RDOE formulation also delivers \emph{proportional fairness} in network capacity allocation in the sense of \cite{Kelly1997}.
\end{enumerate}

\changed{Numerical case studies are provided to illustrate and quantify the efficiency and effectiveness of the proposed procedure on a conceptual 2-bus network, a real-world 33-bus Australian representative low voltage network, and a 132-bus synthetic network.}

The remainder of the paper is organised as follows. Section \ref{sec_02} presents a typical mathematical model for DOEs calculation based on UTOPF, and conceptually introduces the proposed RDOEs and how they relate to the FR of the underlying optimisation problem. The specific formulation and a three-step procedure to determine RDOEs is presented in Section \ref{sec_03}, including a first step to identify an initial \emph{decoupled FR} (DFR) while optimising reactive powers, a second step to update the FR by fixing reactive powers to their optimal values and then removing redundant constraints, and a third step to expand this DFR further. Discussions on the proposed allocation approach's fairness are also covered in Section \ref{sec_03}. Numerical case studies on three networks are presented in Section \ref{sec_04}, and \changed{the paper is concluded in Section \ref{sec_05} along with discussions on potential extensions of the proposed approach.}

\section{Deterministic UTOPF-Based DOEs Calculation and The Concept of RDOEs}\label{sec_02}
\subsection{Deterministic UTOPF-based DOEs calculation and the FR}
A common UTOPF formulation to calculate DOEs for imports (DOEI)  has the form
\begin{subequations}
	\label{doee-cons}
	\begin{eqnarray}
		\label{doee-obj-01}
		\max_{(P_1,Q_1),\cdots,(P_n,Q_n)}{\sum\nolimits_n f(P_n)}\\
		\label{doee-cons-01}
		V^{\phi}_{i_\text{ref}}=V^{\phi}_{0}~~\forall \phi,\forall i\\
		\label{doee-cons-02}
		V^{\phi}_{i}-V^{\phi}_{j}=\sum\nolimits_\psi{z_{ij}^{\phi\psi}I^{\phi}_{ij}}~~\forall \phi,\forall ij\\
		\label{doee-cons-03}
		\sum_{n:n\rightarrow i}{I^{\phi}_{ni}}-\sum_{m:i\rightarrow m}{I^{\phi}_{im}}=\sum\nolimits_m{I^{\phi}_{i,m}}~~\forall \phi,\forall i\\
		\label{doee-cons-04}
		I^{\phi}_{i,m}=\frac{\mu_{\phi,i,m}(P^\text{}_{m}-\text{j}Q^\text{}_m)}{(V^\phi_{i})^*}~~\forall \phi,\forall i,\forall m\\
		\label{doee-cons-05}
		V^\text{min}_{i}\le |V^{\phi}_{i}|\le V^\text{max}_{i}~ \forall \phi,\forall i\\
		\label{doee-cons-06}
		|I^{\phi}_{ij}|\le I^\text{max}_{ij}~ \forall \phi,\forall ij,
	\end{eqnarray}
\end{subequations}
where the objective
$f(P_n)$ is designed to optimise DOEs allocations for customers $1,\dots,n$, possibly including a criterion of fairness.
The reference bus $V^\phi_{i_{\text{ref}}}$ is set to a fixed voltage $V^{\phi}_{0}$ at phase $\phi$.
$V^{\phi}_{i}$ is the voltage of phase $\phi$ at node $i$, and
$I^{\phi}_{ni}$ is the current in phase $\phi$ of line $ni$: flowing from bus $n$ to bus $i$.
$I^{\phi}_{i,m}$ is the current demanding from phase $\phi$ of bus $i$ from customer $m$.
$P_m$ is the active power demand of customer $m$ while $Q_m$ is its reactive power demand.

For simplicity, all $P_m$ and $Q_m$ can be treated as variables in the formulation if they are controllable but otherwise can be fixed to forecasted values if they are uncontrollable. The parameter $\mu_{\phi,i,m}\in\{0,1\}$ indicates the phase to which customer $m$ is connected, taking the value $1$ if it is connected to phase $\phi$ of bus $i$ or $0$ otherwise.  The regulatory lower and upper voltage limits for of $|V_{i}|$ are
$V^\text{min}_{i}$ and $V^\text{max}_{i}$, while $I^\text{max}_{ij}$ represents the upper (thermal) limit for $|I_{ij}|$.

Equation \eqref{doee-cons-01} defines the voltage at the reference bus, while \eqref{doee-cons-02} represents the voltage drop equation in each line, and Kirchhoff's current law is given by \eqref{doee-cons-03}-\eqref{doee-cons-04}. Voltage and current magnitude limits are expressed as \eqref{doee-cons-05} and \eqref{doee-cons-06}, respectively. Further, if $P_m$ represents the export limit of customer $m$, \eqref{doee-cons} reports the DOEs for exports (DOEE) after changing $``P_m"$ by $``-P_m"$ in \eqref{doee-cons-04}.

The optimisation constraints \eqref{doee-cons-01}-\eqref{doee-cons-06} in fact define the FR, denoted $\mathcal{F}(q)$, as a function of $q=\{Q_1,\cdots,Q_n\}$ in which, if decision variables $p=\{P_1,\cdots,P_n\}$ fall within, the operational security of the network can be guaranteed \cite{Wei2015,Wei2015a,Riaz2022}. The formulation \eqref{doee-cons} can be compactly expressed as
\begin{eqnarray}\label{utopf_oe_compact}
    \max_{q,p}\{{H(p)}|s.t.~p\in\mathcal{F}(q)\}.
\end{eqnarray}

Moreover, $p$ can be replaced by $-p^-$, where $p^-$ is a non-negative number representing the exported power, for calculating DOEE or by $p^+$, a non-negative number representing the imported power for calculating DOEI. 
The feasibility region $\mathcal{F}(q)$ is, in general, non-convex and non-linear due to the constraints \eqref{doee-cons-04}-\eqref{doee-cons-06}. However, by fixing $V_i^{\phi}$ at its estimated or measured values in \eqref{doee-cons-04} and linearising \eqref{doee-cons-05} and \eqref{doee-cons-06} \cite{BL-isgt-asia,Petrou2021,PRD_LVDN_01}, a linear version of $\mathcal{F}(q)$, denoted as $\mathcal{F}_L(q)$, can be derived and expressed as the following compact form
	\begin{equation}
          \mathcal{F}_L(q)
		\label{fr-02}
		=\left\{p\left\vert\begin{matrix}
			Ap+Bq+Cv=d                 \\
			Ev\le f                    \\
		\end{matrix}\right.\right.
	\end{equation}
where 
$v$ is a vector consisting of variables related to nodal voltages;
$A,B,C,d,E$ and $f$ are constant parameters with appropriate dimensions;
$Ap+Bq+Cv=d$ represents the linearised power flow equations that link the power demands and nodal voltages,
and $Ev\le f$ represents all the operational constraints, including voltage magnitude limits and current magnitude limits of distribution lines.

\changed{Moreover, if default lower and upper limits for $p$ (denoted as $\underline p$ and $\bar p$, respectively), which can be determined based on the customer's installed DER capacity and historical load profiles respectively, exist, constraint $\underline p\le p\le\bar p$ can also be conveniently incorporated into \eqref{fr-02}.}

Defined by \eqref{fr-02}, $\mathcal{F}_L(q)$ is a polyhedron, which indicates that, generally, each active customer's own feasible range is dependent on load or generations, i.e. $p_i$, of other active customers, as shown in Fig.~\ref{fig_robust_doe_concept_FR} through a conceptual example and to be discussed in next subsection. 
\begin{figure}[htb!]
	\centering\includegraphics[scale=0.28]{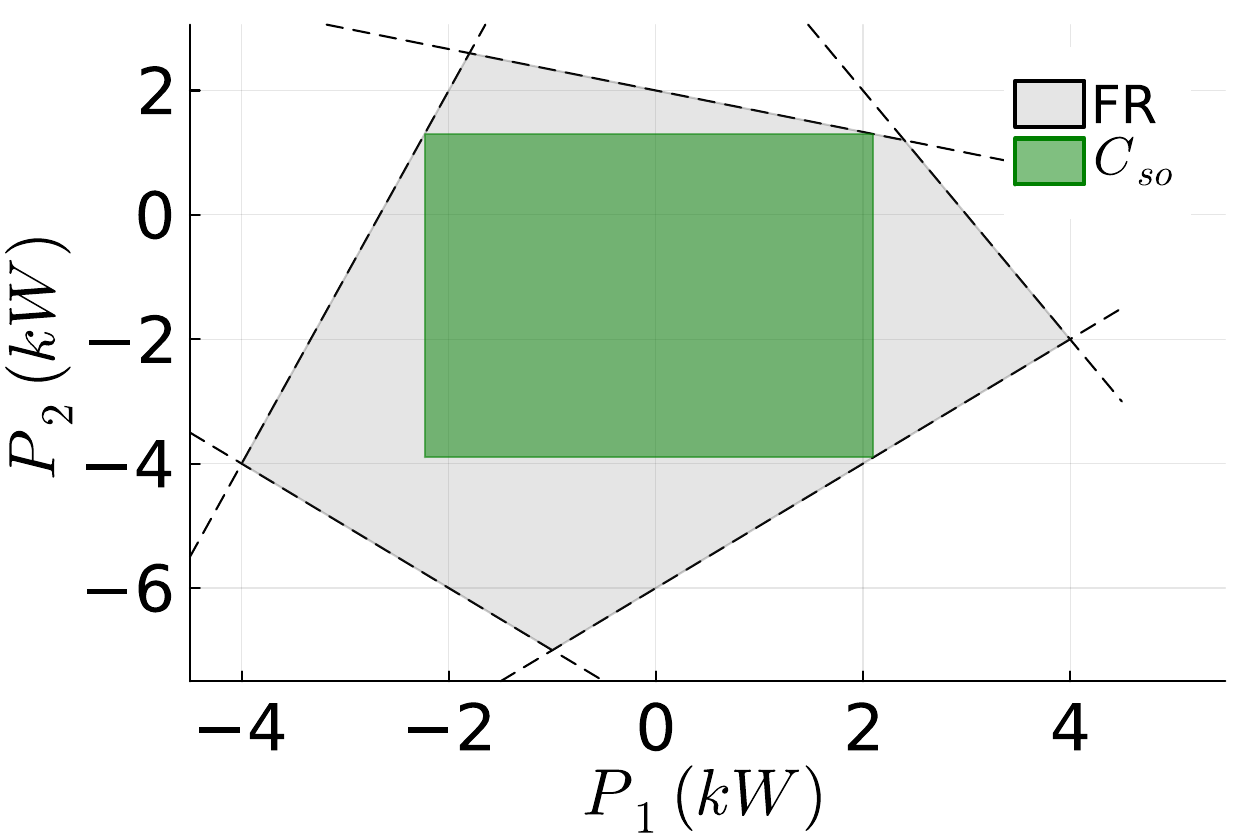}
  \caption{The conceptual example for: (a) the original FR; (b) the maximum DFR, denoted as $\mathcal{C}_{so}$, in the FR by maximising $P_1+P_2$.}
	\label{fig_robust_doe_concept_FR}
\end{figure}

\subsection{Issues of deterministic approaches for calculating DOEs}
However, as pointed out in \cite{BL_ieee_access}, it is still possible that the network will face operational issues \emph{even if all customers follow the DOEs} calculated by deterministic approaches, including the UTOPF-based approach discussed above. Hence, not every capacity allocation scenario within the DOEs is in fact feasible. We illustrate this issue again with the conceptual example. 

\begin{example}
  \label{example:math-form-utopf}
  Consider the case of two customers with DOEs permissible active power export (negative values) and import (positive values) ranges as shown in Fig.~\ref{fig_robust_doe_concept}. Assume their reactive powers  $Q_1$ and $Q_2$ are fixed, and the FR for the variable pair $(P_1, P_2)$ is defined by the grey area of the (2-dimensional) polygon delimited with a dashed line, mathematically expressed as
  \begin{subequations}
	\begin{eqnarray}
		P_1+3P_2\le 6,~2P_1+P_2\le 6\nonumber\\
		P_1-P_2\ge 6,~P_1+P_2\ge -8,~3P_1-P_2\ge -8\nonumber
	\end{eqnarray}
\end{subequations}

If the objective is to maximise the total exports, i.e., $f(P)=-P_1-P_2$, then the optimal allocation would be -4kW for both $P_1$ and $P_2$. However, noting that when applying DOEs in a network it is expected that each customer is allowed to freely control their powers within the range [-4kW, 0kW], it can be observed that not all scenarios falling within the ranges, i.e. any $(P_1,P_2)\in$ [-4kW, 0kW]$\times$[-4kW, 0kW], are feasible because the feasible range of a single customer is 
dependent on the other customer's specific load or generation. In other words, if the two customers are operating at the point (-4 kW, -4 kW), customer 1 needs to reduce its exporting power in order to enable customer 2 to increase its exporting power.

It is noteworthy that if all active customers within one distribution network are managed by the same virtual power plant (VPP) aggregator, the FR, either published by the DSO or calculated by the VPP aggregator, can be used to utilise available network capacity more efficiently. However, if active customers are managed by various VPP aggregators, which is more likely to happen in the future, fully utilising available network capacity would require complicated coordination between DSO and VPP aggregators, potentially leading to the high cost to upgrade the network's communication infrastructure.

\end{example}

\subsection{From FR to RDOEs}
\label{sec:from-fr-roes}
Technically, the FR provides a guideline for securely operating the system, which, however, may need expensive upgrades in communication and control infrastructures. Alternatively, to avoid such upgrades, we can derive a DFR within the original coupled FR and issue the DOEs accordingly. The concept of DFR, which is geometrically a hyperrectangle, can be illustrated by $\mathcal{C}_{so}$ in Fig.~\ref{fig_robust_doe_concept_FR}. Compared with FR in Fig.~\ref{fig_robust_doe_concept_FR}, both customers 1 and 2 can freely vary their powers within $\mathcal{C}_{so}$ while not incurring any operational violations. In other words, the boundary values of $\mathcal{C}_{so}$ can be issued as DOEs for all active customers, and such DOEs, with higher robustness, are referred to as RDOEs in this paper.

Mathematically, the DFR can be expressed as the hyperrectangle $\mathcal{F}_c(q)=\{p|p^-\le p\le p^+\}=\{p|p_i^-\le p_i\le p_i^+,~\forall i\}$, where $p_i^-$ and $p_i^+$ define the boundaries of the hyperrectangle for  customer $i$. Since $\mathcal{F}_c(q)\subseteq\mathcal{F}_L(q)$, seeking a maximal DFR within the FR can be formulated as an optimisation problem of the form
\begin{eqnarray}\label{ro_doe_01}
	\max_{q,p^-,p^+}\{H(p^-,p^+)|s.t.~{F}_c(q)\subseteq \mathcal{F}_L(q), \underline q\le q \le\bar q\}
\end{eqnarray}
where $H(p^-,p^+)$, different from the objective function in \eqref{utopf_oe_compact}, is a function of both $p^-$ and $p^+$, and $\underline q$ and $\bar q$ are the lower and upper limits for $q$, respectively. 

The FR for the whole network needs to be calculated before calculating RDOEs. Also, as DFR is a subset of the FR, as shown in Fig.~\ref{fig_robust_doe_concept_FR}, there are numerous DFRs, and it is critical to find the \emph{best} one, which will be discussed in the next section.

\section{Constructing RDOE with Capacity Allocations of Proportional Fairness}\label{sec_03}
\subsection{Generic formulation and computational challenges}\label{rdoe-01}
Noting that $\mathcal{F}_c(q)\subseteq \mathcal{F}(q)$ is equivalent to all extreme points of $\mathcal{F}_c(q)$ belonging to $\mathcal{F}(q)$, \eqref{ro_doe_01} can be equivalently reformulated as
\begin{subequations} \label{ro_doe_02}
	\begin{eqnarray}
		\max_{q,p^-,p^+}{H(p^-,p^+)}\\
		s.t.~~~~~~~~
		\underline p\le p^+\le\bar p,~\underline p\le p^-\le\bar p,~
		\underline q\le q \le\bar q\\
		\label{ro_doe_02_sce}
		p_s=\alpha^s p^-+(1-\alpha^s)p^+~~\forall s\in [1,\cdots,2^{N}]\\
		Ap_s+Bq+Cv_s=d~\forall s\\
		Ev_s\le f~\forall s
	\end{eqnarray}
\end{subequations}
where
$\alpha$ is an introduced constant vector with each of its elements being a binary number, in order to make sure \eqref{ro_doe_02_sce} covers all extreme points of $\mathcal{F}_c(q)$, and $N$ represents the cardinality of $p$.

For $N$ with a small value, \eqref{ro_doe_02} can be efficiently solved by off-the-shelf solvers. However, if $N$ is a large number, \eqref{ro_doe_02} will be intractable due to hundreds of millions of constraints introduced into the optimisation problem. For example, when $n=30$, the number of constraints in the formulation would be more than $2^{30}>1~billion$. To address the computational issue, we in this paper propose to seek the maximum DFR by three steps, as we discuss next.

\subsection{A constructive three-step approach}
The proposed approach comprises the following three steps:
(i) seek an initial DFR, the largest hyperrectangle within the maximum inscribed hyperellipsoid of the initial FR while optimising controllable reactive powers, (ii) update the FR using the optimal reactive power solution in the previous step and removing redundant constraints, and (iii) expand the initial DFR to fit the updated FR.  

\begin{figure}[htb!]
	\centering\includegraphics[scale=0.28]{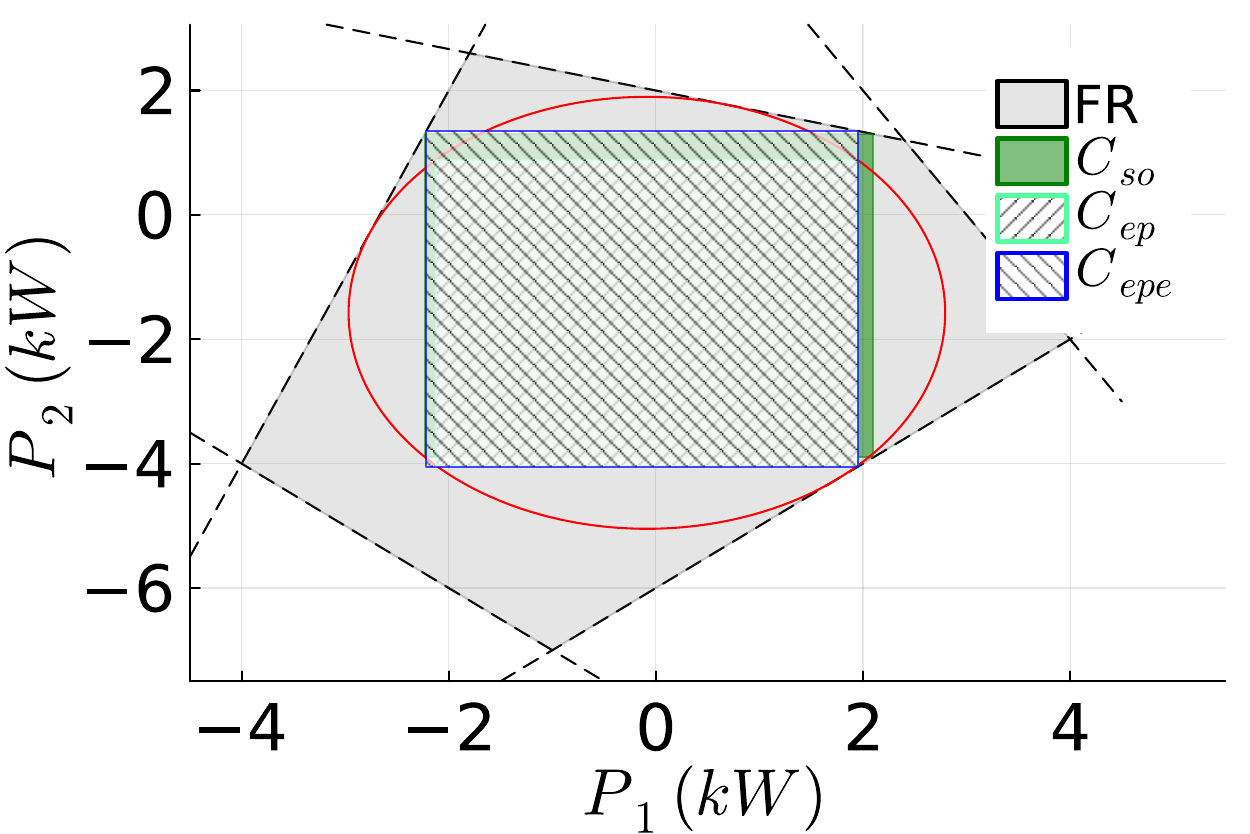}
	\caption{The conceptual example for: (a) the feasible region, denoted as FR; (b) The maximum hyperrectangle calculated by SO, denoted as $\mathcal{C}_{so}$; (c) the maximum hyperellipsoid, the boundary of which is marked in red, within the FR, denoted as $\mathcal{E}$; (d) the maximum DFR in $\mathcal{E}$, denoted as $\mathcal{C}_{ep}$; and (e) the maximum DFR expanding from $\mathcal{C}_{ep}$, denoted as $\mathcal{C}_{epe}$.}
	\label{fig_robust_doe_concept}
\end{figure}

\subsubsection{Determine largest hyperrectangle within the maximum hyperellipsoid inscribed in the FR} \label{sec:oes-with-mix}
To find an initial DFR we first fit the maximum hyperellipsoid inscribed in the polyhedral FR \eqref{fr-02}.  This first step provides a convenient convex optimisation intermediate on which the initial DFR is inscribed. \changed{Hyperellipsoids have been used in power system optimisation problems to efficiently approximate high-dimensional FR specified by convex surfaces \cite{Sarić2008956}.}

The maximum hyperellipsoid inscribed in the FR, denoted as $\mathcal{E}=\{Lu+u_c\big|||u||_2\le 1\}$ with $u_c$ being the centre and $L$ a positive definite diagonal matrix to define the lengths of all axes, is the subset that has maximum \emph{volume} within the FR. Seeking the maximum hyperellipsoid in the FR can be formulated with or without considering customers' operational statues, as we discuss next. 

\textbf{Without considering customers' operational statuses.}
For this case, the problem can be formulated as 
\begin{subequations}\label{max_ellip}
	\begin{eqnarray}
		\label{max_ellip_obj}
		\max_{q,L,u_c}{\log(\det (L))}\\
		s.t.~~~~~~~~\underline q\le q \le\bar q\\
		||EC^{-1}AL||_2-EC^{-1}Au_c\le f-EC^{-1}(d-Bq)
	\end{eqnarray}
\end{subequations}

As $L$ is a positive definite diagonal matrix, it implies $\log(\det (L))=\sum_i{\log(L_{ii})}$. 
Note that by maximising the volume of $\mathcal{E}$ the volume of its largest hyperrectangle $\mathcal{C}_{ep}$ is also maximised, as shown in Appendix~\ref{sec-eq-ellipsoid}. Then by solving \eqref{max_ellip} the corresponding largest hyperrectangle $\mathcal{C}_{ep}$, namely the calculated DFR, also results, and can be expressed as
\begin{eqnarray}\label{dfr-express}
\mathcal{C}_{ep}=\{p|\frac{-L^*_{ii}}{\sqrt{n}}+u^*_{c,i}\le p_i \le \frac{L^*_{ii}}{\sqrt{n}}+u^*_{c,i},~~\forall i\}
\end{eqnarray}
where $n$ is the total number of active customers, and $L^*$ and $u^*_c$ are the optimal solutions of $L$ and $u_c$, respectively.

The maximum hyperellipsoid $\mathcal{E}$ inscribed within the FR and the largest DFR, i.e. $\mathcal{C}_{ep}$, within $\mathcal{E}$ are also conceptually presented in Fig.~\ref{fig_robust_doe_concept}. It is also noteworthy that under this circumstance, the same optimal solution for \eqref{max_ellip} can be acquired by reformulating \eqref{max_ellip_obj} as
{\begin{align}\label{prop_fair}
        \max\prod_i{(p^+_i+p^-_i)}~
        or~\max\sum_i{\log(p^+_i+p^-_i)}
    \end{align}}

\textbf{Considering customers' operational statuses.}
In this case, customers' operational statuses can be taken into the formulation to utilise the network's latent hosting capacity more efficiently. For example, if a customer only exports/imports power, the objective function can be modified to make sure its DOEE/DOEI will be maximised while its DOEI/DOEE is forced to be around 0 kW. Under such circumstances, the optimisation problem can be modified as
\begin{subequations}
	\label{dfr_status}
	\begin{eqnarray}
        \label{mod_cons_01}
		\max_{q,L,u_c}{\log(\det (L))-
			\varepsilon_{md}\sum\nolimits_i \delta_i}\\
        \label{mod_cons_02}
		s.t.~~~~~~~~\underline q\le q \le\bar q\\
		||EC^{-1}AL||_2-EC^{-1}Au_c\le f-EC^{-1}(d-Bq)\\
		\label{mod_cons_03}
		-\delta_i\le u_c(i)-\lambda_i \frac{L_{ii}}{\sqrt{n}}\le \delta_i~~\forall i\\
		\label{mod_cons_04}
		\delta_i\ge 0~~\forall i
	\end{eqnarray}
\end{subequations}
where
$\varepsilon_{md}$ is the penalty factor;
$\delta_i$ is an introduced slack variable for customer $i$;
$u_c(i)$ is the $i-th$ element of vector $u_c$;
$\lambda_i$ is a binary number to indicate the DOEs status of customer $i$: $\lambda_i=1$ indicates it is importing power while $\lambda_i=-1$ indicates the customer is exporting power.

The newly introduced constraints \eqref{mod_cons_03}-\eqref{mod_cons_04} are to make sure the DOEE/DOEI is non-negative when the customer is only importing/exporting power. For example, if customer $i$ is exporting power, we have $\lambda_i=-1$ and $-\delta_i\le u_c(i)+ {L_{ii}}/{\sqrt{n}}\le \delta_i$, where $u_c(i)+ {L_{ii}}/{\sqrt{n}}$ is the to-be-allocated import limit. As both $u_c(i)+ {L_{ii}}/{\sqrt{n}}=0$ and avoiding infeasibility are targeted, a nonnegative slack variable $\delta_i$ is introduced and, at the same time, penalised with a large coefficient $\varepsilon_{md}$ in the objective function. In other words, $\delta_i$ will be 0 unless $u_c(i)+ {L_{ii}}/{\sqrt{n}}=0$ makes the problem infeasible.
Similarly, if customer $i$ is importing power, we have $\lambda_i=1$ and $-\delta_i\le u_c(i)-{L_{ii}}/{\sqrt{n}}\le \delta_i$, where $u_c(i)-{L_{ii}}/{\sqrt{n}}$ is the export limit to be allocated.
As \eqref{dfr_status} is a convex optimisation problem, it can be efficiently solved by off-the-shelf solvers such as \texttt{Ipopt} \cite{ipopt} and \texttt{Knitro} (trial version 13.2.0) \cite{knitro}.

\color{black}
It is noteworthy that the operational statuses of all customers are assumed to be known in \eqref{dfr_status}. However, when such information is only known for a subset of active customers, \eqref{dfr_status} can be further revised as follows to deal with this issue. 
\begin{enumerate}
    \item Further update \eqref{mod_cons_01} as
    \begin{eqnarray}\label{mod_cons_01_revised}
    		\max_{q,L,u_c}{\log(\det (L))-\varepsilon_{md}\sum_{i\in C_{os}} \delta_i-\varepsilon_{uc}\sum_{i\in C_{ns}} |u_c(i)|}
    	\end{eqnarray}
     where
    $\varepsilon_{uc}$ is another penalty factor;
    $C_{os}=C_{oee}\cup C_{oei}$ with $C_{oee}$ and $C_{oei}$ representing the set of active customers that are exporting and importing powers respectively, and 
    $C_{ns}$ represents the set of active customers with unknown operational statuses.
    \item Constraints \eqref{mod_cons_03} and \eqref{mod_cons_04} are only applied for customers in $C_{os}$.
\end{enumerate}  

Since the lower and upper limits of DOEs for customers without known operational statuses are symmetrical with respect to the centre $u_c(i)$, with the new penalty term added to the objective function and $\varepsilon_{uc}$ being appropriately selected, the optimisation problem, which is still convex, will try to make the absolute values of the export and import limits for these customers as close to each other as possible. 

\color{black}
After solving \eqref{dfr_status}, the largest hyperrectangle can again be calculated according to \eqref{dfr-express}.

\subsubsection{Update the FR and remove redundant constraints}
After solving \eqref{dfr_status}, the FR, defined as \eqref{fr-02}, can be updated by replacing $q$ by its optimal value. However, to improve the computational efficiency when further expanding the initially identified DFR, it is necessary to remove the redundant constraints in the FR. Denoting the updated FR as $\{p|\widetilde{G}p\le \widetilde{g}\}$, removing redundant constraints can be realised through Algorithm \ref{alg_remove} \cite{Fukuda2014}.
\begin{algorithm}[htb]
	\footnotesize
	\caption{\emph{Remove redundant constraints in FR}}
	\label{alg_remove}
	\begin{algorithmic}[1]
		\For {Each the $i-th$ row in $\widetilde{G}$}
		\State Solve the following optimisation problem and denote the optimal solution as $O_i$
		\begin{eqnarray}
			\max_{x}\{\widetilde{G}_{i,:}x-\widetilde{g}_i|\widetilde{G}_{i,:}x\le \widetilde{g}_i+1,~\widetilde{G}_{k,:}x\le \widetilde{g}_k~\forall k\neq i\}
		\end{eqnarray}
		\EndFor
		\State \changed{Keep} the $i-th$ row in $\widetilde{G}$ and the $i-th$ element in $\widetilde{g}$ if $O_i>0$.
	\end{algorithmic}
\end{algorithm}

\subsubsection{Further expand the DFR}\label{sec_expand_DFR}
Comparing the FR and $\mathcal{C}_{ep}$ in Fig.~\ref{fig_robust_doe_concept}, it is evident that more areas within the FR can be added to the DFR. Moreover, for a real distribution network with many customers, conservatism can be significant and further expanding the initially found DFR becomes necessary.
The essential idea is further seeking another polyhedron that is a superset of the initially identified DFR while also a subset of the FR. For the conceptual example, the expanded DFR is $\mathcal{C}_{epe}$ in Fig.~\ref{fig_robust_doe_concept}.

Let the DFR determined in the first step be compactly expressed as $\{p|Ep\le f, E\in \mathbb{R}^{m\times n},f\in\mathbb{R}^{n\times 1}\}$, where $f_{2k-1}$ and $f_{2k}$ correspond to the import and export limits for the $k-th$ customer respectively, and the FR on $p\in\mathbb{R}^{n\times 1}$ after removing redundant constraints as $\{p|Gp\le g,G\in \mathbb{R}^{u\times n},g\in\mathbb{R}^{u\times 1}\}$.  We then expand the initial DFR ($\mathcal{C}_{ep}$), using Proposition \ref{MTT_subset} in the appendix, by solving the following problem:
\begin{subequations}\label{expanding_dfr_cvx}
	\begin{eqnarray}
		\label{expanding_dfr_cvx_01}
		\max_{x,\Delta f}\sum_i{\log{(f_{2i-1}+\Delta f_{2i-1}-f_{2i}+\Delta f_{2i})}}\\
		\label{expanding_dfr_cvx_02}
		E^Tx_{:,i}=-G_{i,:}^T~~\forall i\\
		\label{expanding_dfr_cvx_03}
		-(f+\Delta f)^Tx_{:,i}\le g_i~~\forall i\\
		\label{expanding_dfr_cvx_04}
		x\le 0,\Delta f\ge 0\\
		\label{expanding_dfr_cvx_05}
		\Delta f_{2k-1}\le 0~\text{if}~k\in C_{oee}\\
		\label{expanding_dfr_cvx_06}
		\Delta f_{2k}\le 0~\text{if}~k\in C_{oei}
	\end{eqnarray}
\end{subequations}
where
$\Delta f$ is the expanded boundary in a vector for all customers, and $\Delta f_{2k-1}$ and $\Delta f_{2k}$ are the increased import and export limits for the $k-th$ customer, respectively.

In the formulated problem, the objective function \eqref{expanding_dfr_cvx_01} is to achieve the proportional fairness of DOEs allocations among all customers.
\eqref{expanding_dfr_cvx_02}-\eqref{expanding_dfr_cvx_04} are to make sure the expanded DFR contains the original DFR while is also a subset of the FR.
\eqref{expanding_dfr_cvx_05}-\eqref{expanding_dfr_cvx_06} are constraints after considering the operational statuses of all customers. For example, if the $k-th$ customer is to export power, then its import limit increment, which is defined by $\Delta f_{2k-1}$, should be set to zero, as implied by \eqref{expanding_dfr_cvx_04} and \eqref{expanding_dfr_cvx_05}.

The formulated problem \eqref{expanding_dfr_cvx} is non-convex, which implies that a global optima cannot be guaranteed. However, this non-convex problem can be efficiently solved by non-linear optimisation solvers supporting logarithmic objective functions.

\subsection{On the fairness of the proposed approach}\label{sec:fairn-prop-appr}
Noting that the objective function used in \eqref{max_ellip} is with logarithmic form, which is different from those, e.g. polynomial form \cite{Petrou2021}, reported in existing publications. However, the logarithmic objective function actually implies \emph{proportionally fair} DOEs allocations among all customers in the sense of fairness \cite{Kelly1997}. We have the following proposition regarding the allocation fairness of the proposed approach. 
\begin{proposition}
	The DOEs calculated by the proposed approach under the objective function $H(p^+_i+p^-_i)$, which is equivalent to \eqref{prop_fair}, can achieve DOEs allocations with \textbf{proportional fairness}.
\end{proposition}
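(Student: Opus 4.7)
My plan is to reduce the statement to Kelly's original characterisation of proportional fairness and then invoke the first-order optimality conditions of a concave maximisation over a convex feasible set. Recall from \cite{Kelly1997} that, given a convex set $S\subset\mathbb{R}^{n}_{>0}$ of feasible allocations, a vector $x^{*}\in S$ is proportionally fair if $\sum_{i}(x_{i}-x^{*}_{i})/x^{*}_{i}\le 0$ for every $x\in S$, and this inequality is equivalent to $x^{*}$ being the unique maximiser of $\sum_{i}\log x_{i}$ over $S$. Thus, to prove the proposition I would (i) identify the per-customer allocation $x_{i}$, (ii) verify that the effective feasible set of allocations is convex, and (iii) check that the proposed objective coincides, up to an additive constant, with $\sum_{i}\log x_{i}$.

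Items (i) and (iii) I would dispatch immediately. I would set $x_{i}:=p^{+}_{i}+p^{-}_{i}$, the width of the hyperrectangle $\mathcal{F}_{c}$ along its $i$-th axis, which is precisely the total active-power capacity offered to customer $i$. From \eqref{dfr-express} one has $x_{i}=2L_{ii}/\sqrt{n}$, so the objective $\log\det L=\sum_{i}\log L_{ii}$ of \eqref{max_ellip} and the candidate $\sum_{i}\log x_{i}$ differ only by the additive constant $n\log(2/\sqrt{n})$; this is the equivalence already anticipated in \eqref{prop_fair}. For item (ii), I would note that \eqref{max_ellip} is a convex programme --- its constraints are affine or second-order-cone in $(q,L,u_{c})$, and $\log\det L$ is concave on the cone of positive diagonal matrices --- so the feasible set in $(q,L,u_{c})$ is convex. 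Since the allocation map $(q,L,u_{c})\mapsto x$ given by $x_{i}=2L_{ii}/\sqrt{n}$ is linear, its image $S$, the set of attainable allocations, is convex as well.

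With the above in hand, the first-order optimality condition for the strictly concave programme $\max_{x\in S}\sum_{i}\log x_{i}$ at its maximiser $x^{*}$ is $\sum_{i}(x_{i}-x^{*}_{i})/x^{*}_{i}\le 0$ for every $x\in S$, which is exactly Kelly's defining inequality for proportional fairness. Since the proposed approach returns the $x^{*}$ that solves \eqref{max_ellip} --- equivalently, the reformulation \eqref{prop_fair} --- the resulting OE allocation is proportionally fair.

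The main technical subtlety I expect is making precise what ``feasible allocation set'' means here. A careful reader might worry that $x^{*}$ is only being compared against box-widths achievable by hyperrectangles sharing the specific centre $u_{c}^{*}$ returned by the solver, rather than against all widths attainable by some feasible configuration. Addressing this requires arguing that the projection $S=\{x\in\mathbb{R}^{n}_{>0}:\exists\,(q,u_{c})\ \text{with}\ (q,L,u_{c})\ \text{feasible in}\ \eqref{max_ellip}\}$ remains convex as the image of a convex set under a linear map, and that the jointly optimal triple $(q^{*},L^{*},u_{c}^{*})$ indeed attains $\max_{x\in S}\sum_{i}\log x_{i}$ rather than the maximum on some proper slice with $q$ and $u_{c}$ frozen. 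Once this coupling is handled, the remainder of the proof is a direct invocation of Kelly's characterisation.
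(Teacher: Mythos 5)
Your proposal is correct and follows essentially the same route as the paper: both arguments reduce the claim to the first-order optimality condition of the strictly concave log-sum objective over the (convex) set of feasible allocations $x_i=p_i^++p_i^-$, which is precisely Kelly's defining inequality $\sum_i(\Delta p_i^++\Delta p_i^-)/(p_i^++p_i^-)\le 0$ for proportional fairness. Your treatment is somewhat more careful than the paper's --- in particular in verifying convexity of the projected allocation set and the constant offset between $\log\det L$ and $\sum_i\log x_i$ --- but the underlying argument is the same.
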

\begin{proof}
	Assuming the optimal DOEs calculated with the objective function $H(p^+_i+p^-_i)$ for all customers are $p=\{(p^+_i,p^-_i)~\forall i\}$, and another feasible allocation scenario is $p^*=\{(p^+_i+\Delta p^+_i,p^-_i+\Delta p^-_i)~\forall i\}$, where $\Delta p^+_i+\Delta p^-_i$ is a small perturbation\footnote{It should be noted that if the perturbation leads to the infeasible operational point, i.e. $p^+_i+\Delta p^+_i$ or $p^-_i+\Delta p^-_i$ being outside of FR, $\Delta p^+_i$ and/or $\Delta p^-_i$ should be set to 0.}, if $p^*$ achieves a better allocation result than $p$, we have
		{\begin{eqnarray}
				\sum\nolimits_i{H(p^+_i+\Delta p^+_i+p^-_i+\Delta p^-_i)}>\sum\nolimits_i{H(p^+_i+p^-_i)}
			\end{eqnarray}}

	Noting that $f(x)=\sum_i\log(x_i)$ is strictly concave, we have
		{	\begin{eqnarray}
				\sum\nolimits_i{H'(p^+_i+p^-_i)(\Delta p^+_i+\Delta p^-_i)}=\frac{\Delta p^+_i+\Delta p^-_i}{p^+_i+p^-_i}>0
			\end{eqnarray}}

	In other words, if $p$ is the optimal solution, other than $p^*$, there is
	$$\frac{\Delta p^+_i+\Delta p^-_i}{p^+_i+p^-_i}\le 0$$
	which demonstrates that the DOEs calculated with the objective function as \eqref{prop_fair} can achieve proportional fairness.
\end{proof}

Moreover, the \emph{$\alpha$-fairness} and \emph{$[\alpha,p]$-fairness} allocations, both of which are generalised approaches containing both the \emph{max-min fairness} and \emph{proportional fairness} \cite{Kelly1997,Mo2000}, can also be applied in this case. However, a comparative study of various DOEs allocation approaches is beyond the scope of this paper and falls in our future research interest. It should be also noted that the proportional fairness of the final capacity allocations after the expansion in Section \ref{sec_expand_DFR} can be undermined, although \eqref{expanding_dfr_cvx_01} tries to keep the proportional fairness during the expansion process.
Moreover, although DOEs can potentially be calculated within a single step by reformulating \eqref{expanding_dfr_cvx} while treating $q$ as a variable, it requires that the pair $(\widetilde{G},\widetilde{g})$ instead of the pair $(G,g)$ will be used in solving \eqref{expanding_dfr_cvx}, and a total number of $u(n+m+1)+m$ constraints, which includes $u(n+m+1)$ constraints in formulating \eqref{expanding_dfr_cvx_02}-\eqref{expanding_dfr_cvx_03} and $x\le 0$, as discussed in Appendix \ref{appendix_01}, and another $m$ constraints for formulating $\Delta f\le 0$, will be introduced in formulating \eqref{expanding_dfr_cvx_02}-\eqref{expanding_dfr_cvx_04}. Noting that values for $m$ and $n$ are fixed, while the value of $u$ for the pair $(\widetilde{G},\widetilde{g})$ can be significantly larger than for the pair $(G,g)$, which will be demonstrated further in Section \ref{case_aus}, solving the alternative problem could be computationally demanding or even intractable.
Further investigating approaches in DOEs calculation while better preserving fairness and in a scalable fashion is beyond the scope of the present paper, but is of interest for subsequent study.

\section{Case Study}\label{sec_04}
\subsection{Case setup}
In this section, three distribution networks, including a 2-bus illustrative network, a real 33-bus Australian Network and a 132-bus Synthetic Network, will be studied.
For the illustrative distribution network, where the network topology is presented in Fig.~\ref{fig_2_bus_topology}, an ideal balanced voltage source with the voltage magnitude being $1.0~p.u.$ is connected to bus 1. A three-phase distribution line connects bus 1 and bus 2, and its impedance matrix is
\begin{equation*}
	Z_{12}=\left[\begin{smallmatrix}
			0.3465+\text{j}1.0179 & 0.1560+\text{j}0.5017 & 0.1580+\text{j}0.4236\\
			0.1560+\text{j}0.5017 & 0.3375+\text{j}1.0478 & 0.1535+\text{j}0.3849\\
			0.1580+\text{j}0.4236 & 0.1535+\text{j}0.3849 & 0.3414+\text{j}1.0348
		\end{smallmatrix}\right] \Omega
\end{equation*}

\begin{figure}[htpb!]
  \centering\includegraphics[scale=0.20]{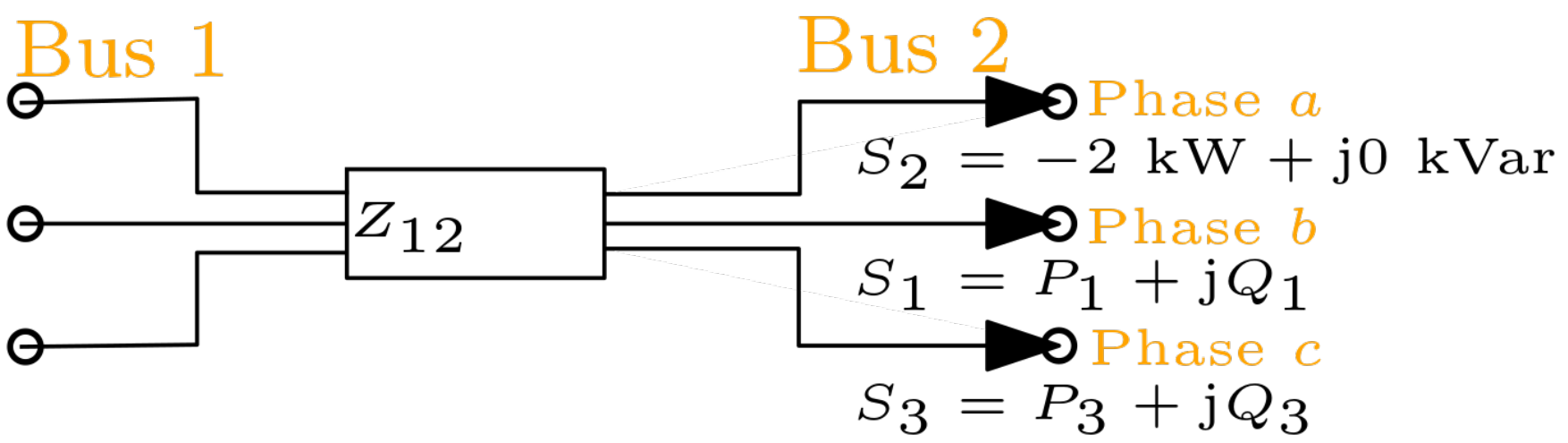}
  \caption{Network topology of the 2-bus illustrative example.}
  \label{fig_2_bus_topology}
\end{figure}

Three single-phase customers, numbered 1 to 3, are connected to phase $b$, $a$ and $c$ of bus 2, respectively, where active and reactive powers of customer 2 are fixed at -2.0 kW and 0.0 kvar, while DOEs for customer 1 and 3 will be calculated. Moreover, the default exporting and importing power limits for both customers are set as 5 kW and 6 kW, respectively, and reactive powers for customers 1 and 3 can either be fixed or be optimised (with both export and import limits being 3 kvar). For this illustrative network, only voltage magnitude constraints are considered for bus 2, where the lower and upper bounds for each phase are set as $0.95~p.u.$ and $1.05~p.u.$, respectively.

The topology of the real Australian distribution network is given in Fig.~\ref{fig_topology_simple_network_J_doe}, and network parameters can be found in \cite{LVFT_data} for network \texttt{J}, where the transformer has been changed to $Y_n/Y_n$ connection with $R$\%=5 and $X$\%=7. The voltage at the reference bus, labelled as \texttt{grid} in Fig.~\ref{fig_topology_simple_network_J_doe}, is set as $[1.0,~1.0e^{-\text{j}\frac{2\pi}{3}},~1.0e^{\text{j}\frac{2\pi}{3}}]^T$. There are 87 single-phase residential customers in this network, and we assume 30 of them (customers: ``1"-``3" and ``10"-``36") are active powers while the remaining are reactive powers. The default exporting and importing limits of all active customers are also set as 5 kW and 6 kW, respectively, and active powers for all passive customers are fixed at their provided values in \cite{LVFT_data}. Reactive powers of active customers are assumed to be controllable (with both export and import limits being 3 kvar) while values for passive customers are also fixed at their values provided in \cite{LVFT_data}. Moreover, all data are originally provided in \texttt{OpenDSS} format \cite{opendss_ref} and are analysed through \texttt{PowerModelsDistribution.jl} \cite{pmd_ref} in order to build optimisation models in \texttt{Julia} \cite{bezanson2017julia}. 

\changed{The 132-bus Synthetic Network with a larger scale is generated based on the 33-bus Australian Network. Specifically, the \emph{sub-network}, including bus ``70" and its downstream, in the 33-bus Australian Network, is replicated thrice and connected to bus ``40", ``55" and bus ``65", leading to an expanded network with 132 buses and 348 customers. In the new sub-networks, new buses and customers are renamed as ``40\_XX", ``55\_XX" and ``66\_XX" respectively, where ``XX" is the same as that in the 33-bus Australian Network. For example, the new customer ``12" replicated in the sub-network connected to bus ``55" is renamed as ``55\_12". Of all the customers, 116 are predefined as active customers, where 58 of them are exporting powers, and the remaining 58 of them are importing powers. Moreover, the capacity of the distribution transformer has been increased from 150 kVA to 800 kVA. Similar to the 33-bus Australian Network, reactive powers of all active powers are assumed to be controllable while they're fixed for passive customers. Other parameters in the 132-bus Synthetic Network are the same as those in the 33-bus Australian Network.}

\changed{Moreover, for the two studied networks, both $\varepsilon_{md}$ and $\varepsilon_{uc}$ are set as 100.}

\begin{figure}[htpb!]
	\centering\includegraphics[scale=0.52]{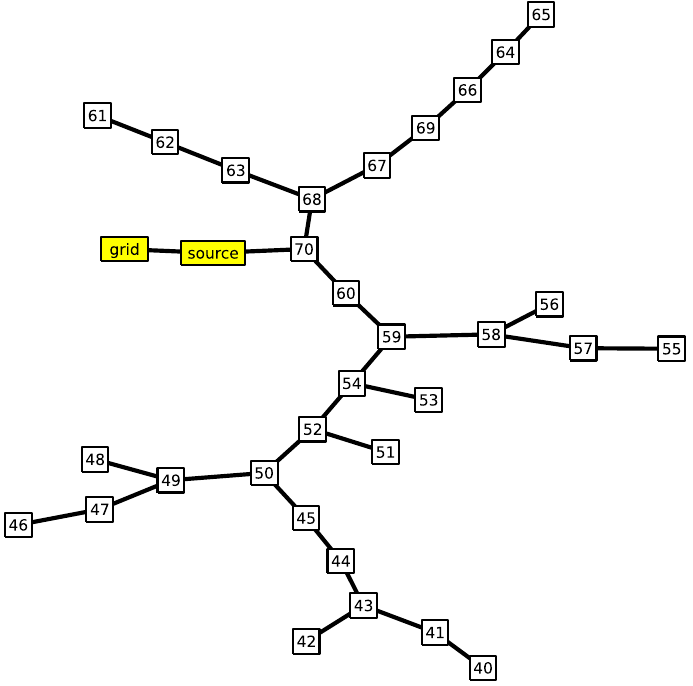}
	\caption{Network topology of the real 33-bus Australian Network (\textbf{Upstream grid (reference bus) and its connected bus} are marked in yellow).}
	\label{fig_topology_simple_network_J_doe}
\end{figure}

\subsection{Illustrative distribution network}
\textbf{On the accuracy of FR under linear UTOPF.}
As a linear UTOPF model is used to derive the network's FR, its accuracy will be investigated based on the 2-bus illustrative system by comparing the results with the FR calculated based on a non-convex UTOPF \cite{Riaz2022}. For the latter approach, 50 generation scenarios, which are evenly distributed in [-5 kW, 6 kW], will be first generated for $P_1$. Then for each scenario, two non-convex UTOPF will be solved, where one of them is to maximise $P_3$ and the other one is to minimise $P_3$, leading to two boundary points for the FR. The calculated FRs under various levels of reactive demands and when the network is with smaller impedance are presented in Fig.~\ref{fig_bus_2_system_FRs}.
\begin{figure}[htbp!]
	\centering
	\begin{subfigure}[b]{0.22\textwidth}
		\centering\includegraphics[width=\textwidth]{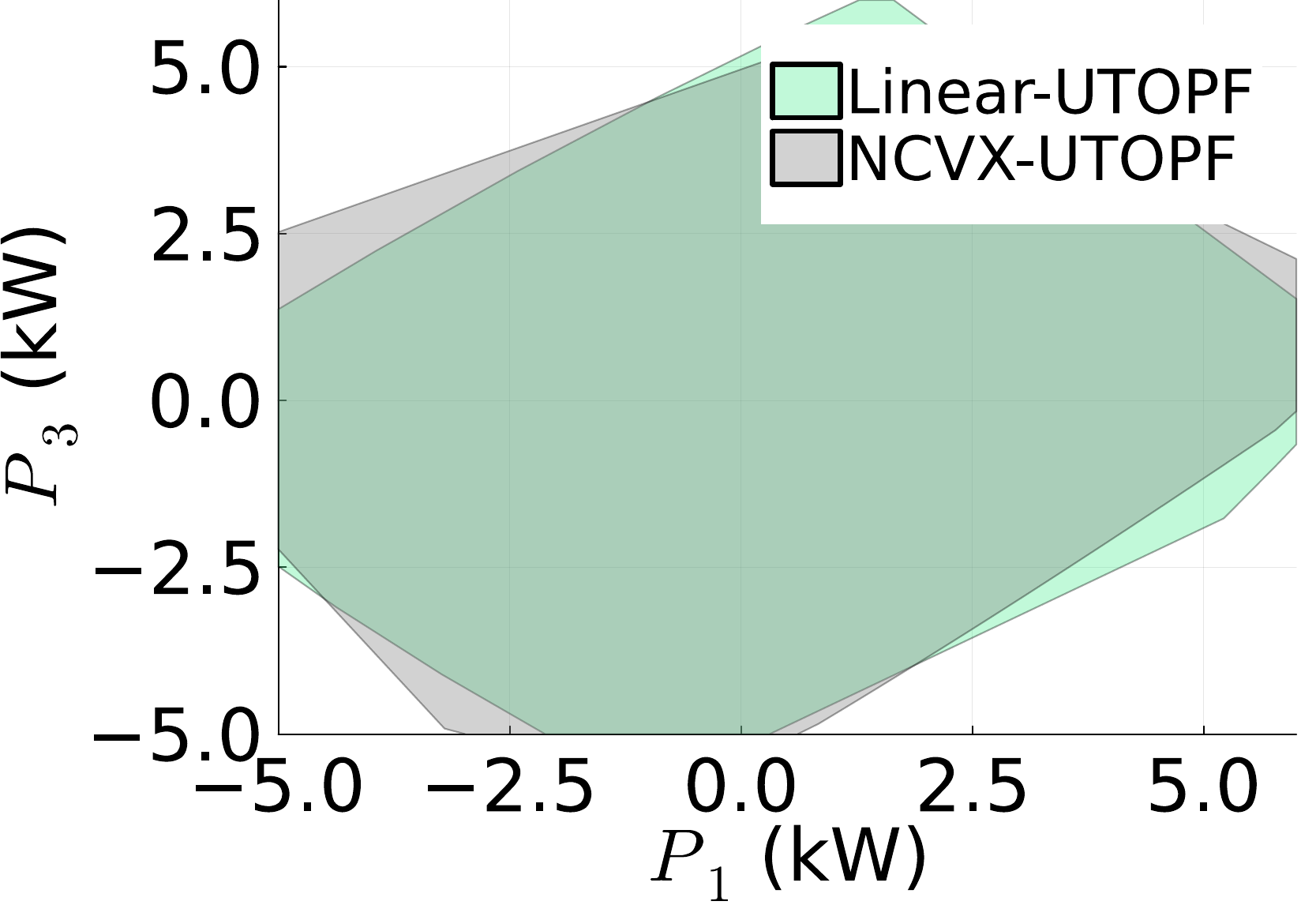}
		\caption{$Q=0$ kvar.}
		\label{fig_2_bus_FRs_Qzero_Png_E5_I6}
	\end{subfigure}
	\hfill
	\begin{subfigure}[b]{0.22\textwidth}
		\centering\includegraphics[width=\textwidth]{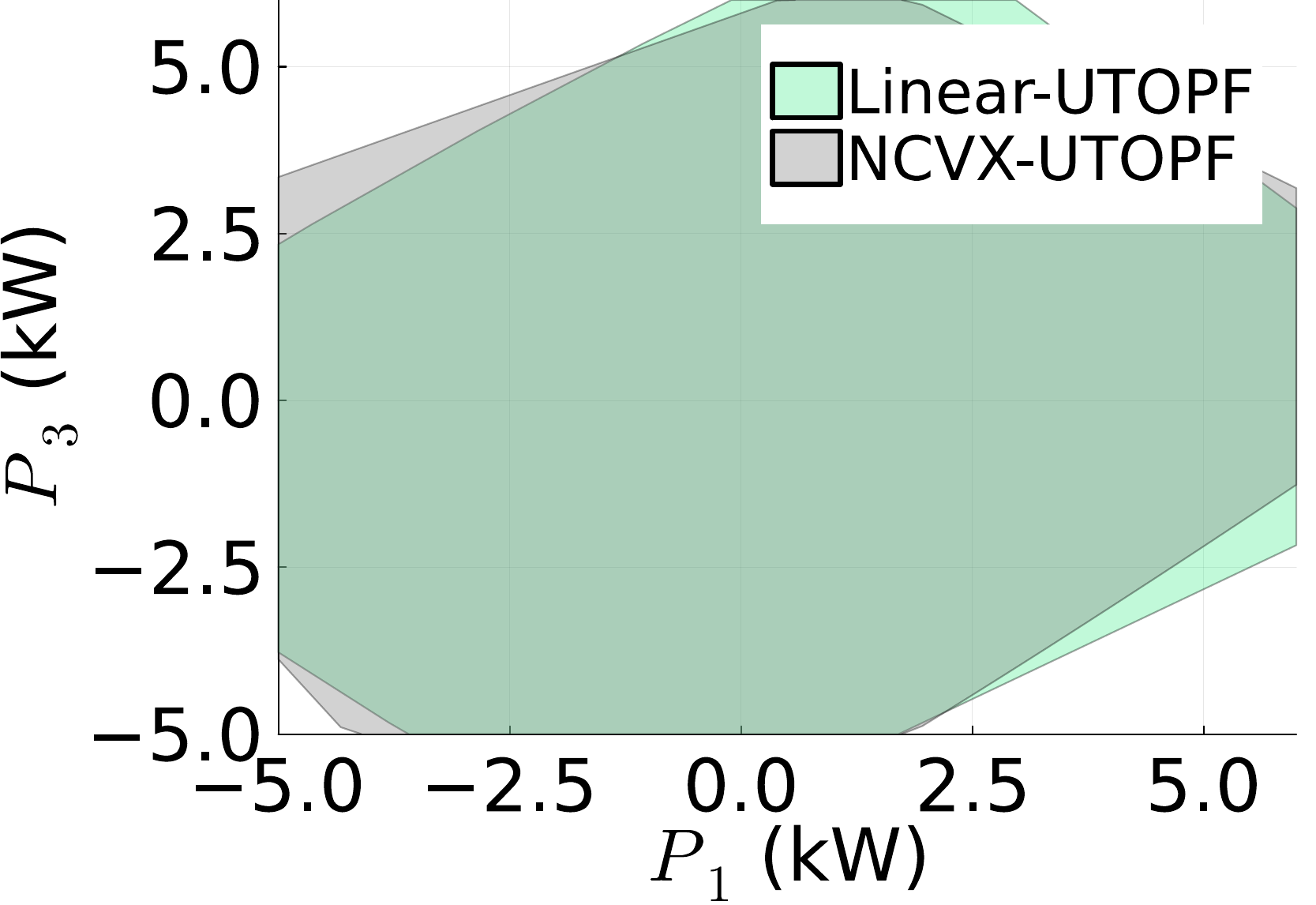}
		\caption{$Q=0$ kvar \& 85\%$Z_{12}$.}
	\end{subfigure}
	\hfill
	\begin{subfigure}[b]{0.22\textwidth}
		\centering\includegraphics[width=\textwidth]{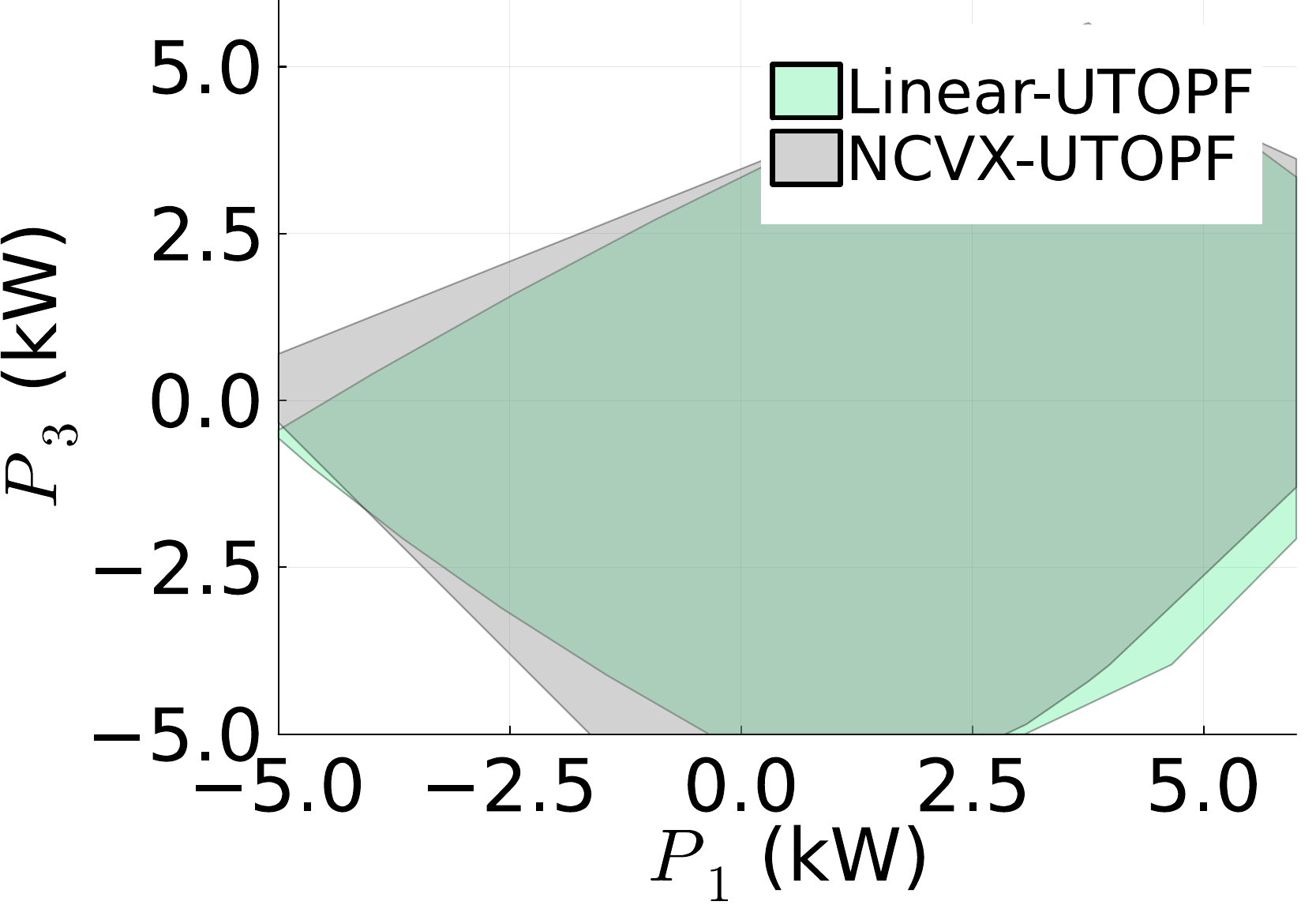}
		\caption{$Q=-1$ kvar.}
	\end{subfigure}
	\hfill
	\begin{subfigure}[b]{0.22\textwidth}
		\centering\includegraphics[width=\textwidth]{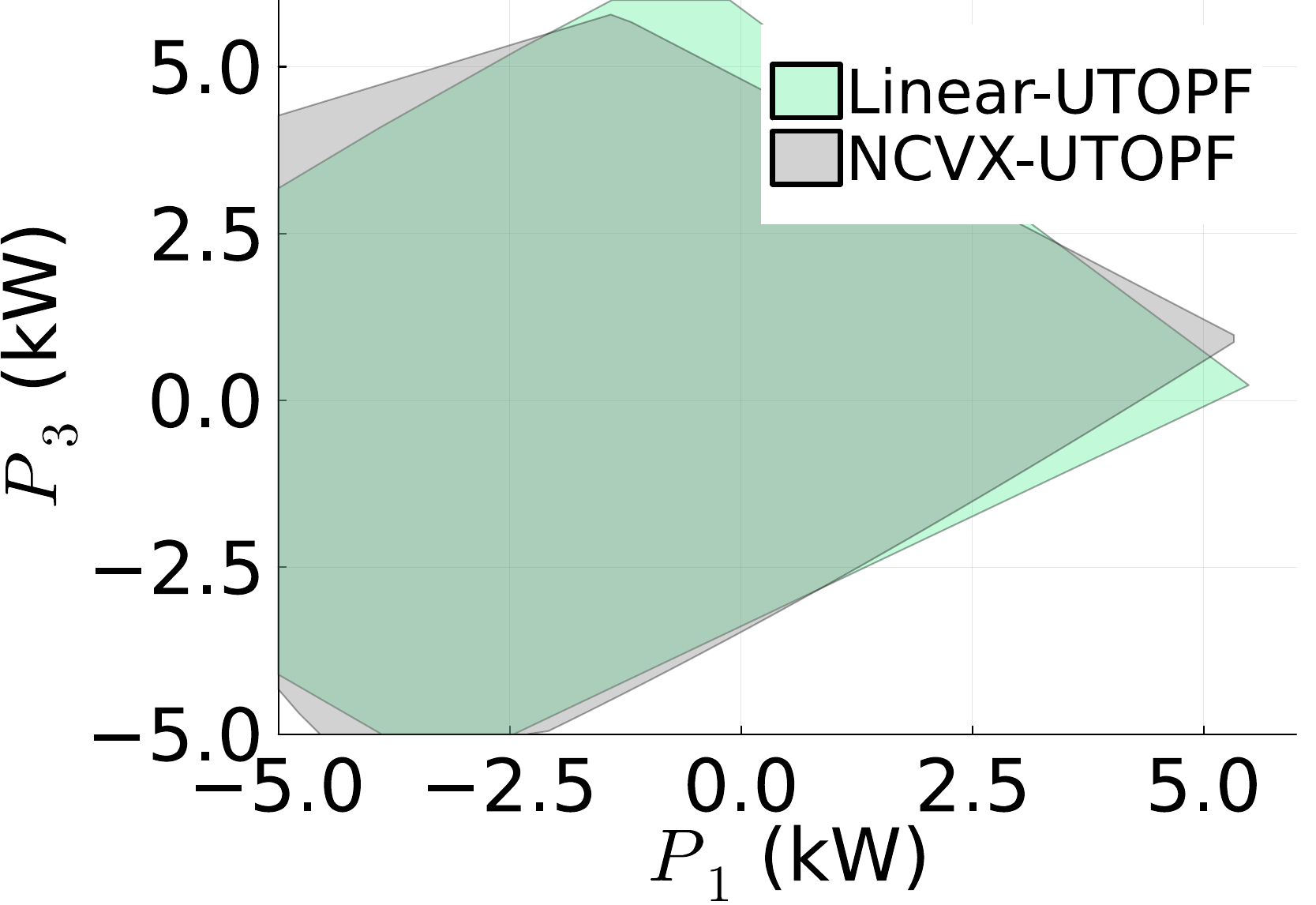}
		\caption{$Q=1$ kvar.}
	\end{subfigure}
	\hfill
	\caption{Calculated FRs based on linear UTOPF model and non-convex UTOPF model (the latter approach was proposed in \cite{Riaz2022}).}
	\label{fig_bus_2_system_FRs}
\end{figure}
Several remarks are given below on the simulation results.
\begin{itemize}
	\item Although the FR calculated by linear UTOPF for this case is close to the one calculated by non-convex UTOPF, errors are inevitable, and are also expected for other general cases, which is an inherent issue for FR calculation based on linear UTOPF models. Moreover, neither of the FRs is dominated by, i.e. being a subset or superset of, the other one.
	\item When the network is with smaller line impedances, errors in calculating FR based on linear UTOPF becomes smaller.
	\item To improve the accuracy of RDOEs, FR can be calculated by the non-convex UTOPF, leading to a polyhedron that is more close to the true FR. However, this approach can be computationally heavy when there are many active customers. For example, for the case with $n(=10)$ customers and $m(=10)$ discrete points being considered for each customer, $2m^{n-1}(=2~billion)$ UTOPF instances need to be solved to get the FR. Moreover, a large number of inequalities in the polyhedron will also lead to heavier computational burdens in both identifying and expanding the initially found DFR. 
	\item The advantage of calculating the FR based on a linear UTOPF model is that it takes neglectable time since the FR can be analytically expressed beforehand. Therefore, any improvement in developing more accurate linear UTOPF models can benefit the proposed approach.
\end{itemize}

\color{black}
\textbf{Base case.}
When reactive powers from customers 1 and 3 are fixed at 0 kvar, simulation results, including the network FR, the maximum DFR ($\mathcal{C}_{so}$) identified through a single stochastic optimisation problem, the DFR ($\mathcal{C}_{ep}$) as the largest hyperrectangle in the maximum inscribed hyperellipsoid ($\mathcal{E}$) within the FR, and the expanded DFR ($\mathcal{C}_{epe}$), are presented in Fig.~\ref{fig_RDOE_2_bus_Qfalse_STStrue_hybrid_mix}. Correspondingly, the issued DOEs for customers 1 and 3 would be [-2.78 kW, 2.78 kW] and [-2.82 kW, 2.23 kW], respectively. It is noteworthy that for this illustrative network, $\mathcal{C}_{epe}$ is larger than both $\mathcal{C}_{ep}$ and $\mathcal{C}_{so}$ in Fig.~\ref{fig_RDOE_2_bus_Qfalse_STStrue_hybrid_mix}, the latter of which is due to the penalty term $\varepsilon_{uc}\sum\nolimits_{i\in C_{ns}} |u_c(i)|$ in the objective function. 

\textbf{The impact of reactive powers.} Compared to the ``Base case'', reactive powers from customers 1 and 3 in this case can be optimised within the range [-3 kvar, 3 kvar]. Simulation results are presented in Fig.~\ref{fig_RDOE_2_bus_Qtrue_STStrue_hybrid_mix}, where the issued DOEs for customers 1 and 3 are [-3.52 kW, 3.54 kW] and [-2.71 kW, 2.72 kW] with optimal values of $Q_1$ and $Q_3$ being $0.44$ kvar and $-1.13$ kvar, respectively. Compared with Fig.~\ref{fig_RDOE_2_bus_Qfalse_STStrue_hybrid_mix}, the \emph{area} of the DFR in Fig.~\ref{fig_RDOE_2_bus_Qtrue_STStrue_hybrid_mix} becomes larger, demonstrating the benefit of achieving better DOEs via optimising reactive powers.
\begin{figure}[htbp!]
	\centering
	\begin{subfigure}[b]{0.24\textwidth}
		\centering
		\includegraphics[width=\textwidth]{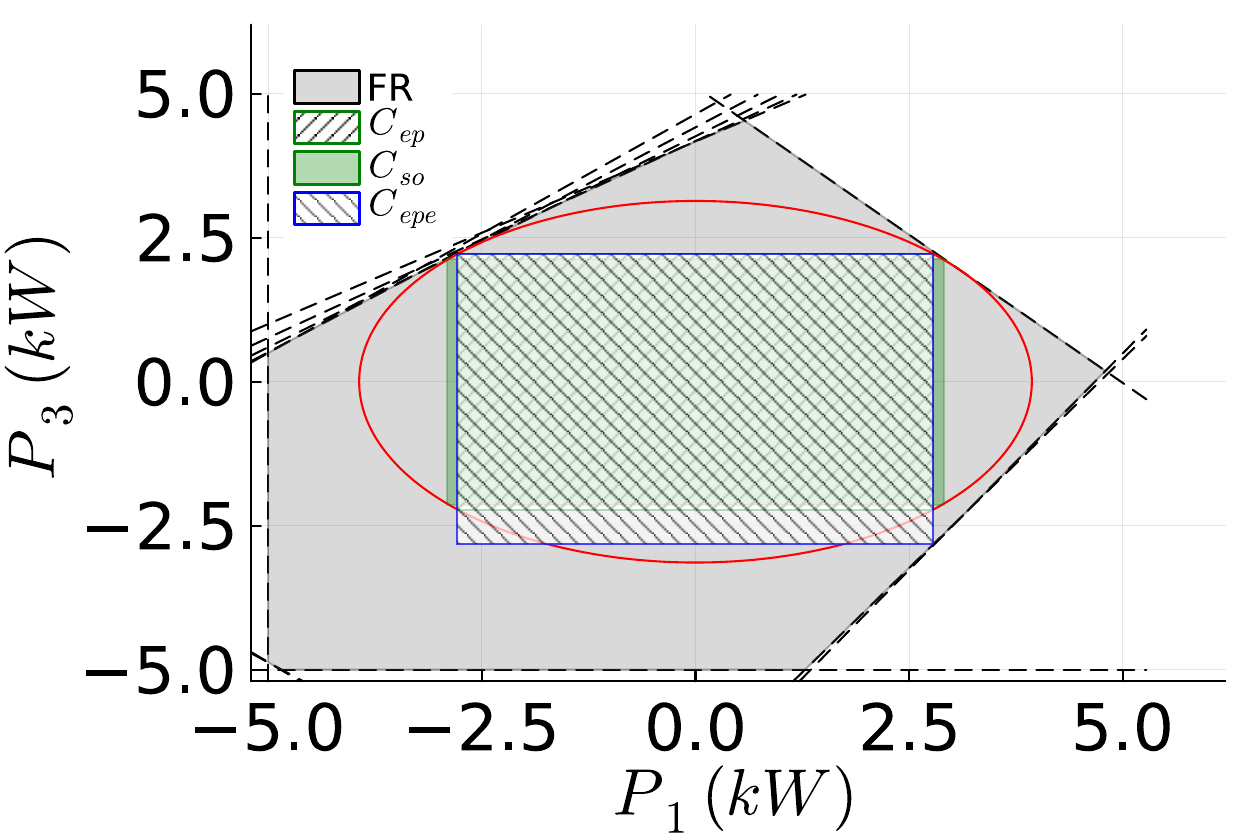}
		\caption{Without $Q$ control.}
		\label{fig_RDOE_2_bus_Qfalse_STStrue_hybrid_mix}
	\end{subfigure}
	\hfill
	\begin{subfigure}[b]{0.24\textwidth}
		\centering
		\includegraphics[width=\textwidth]{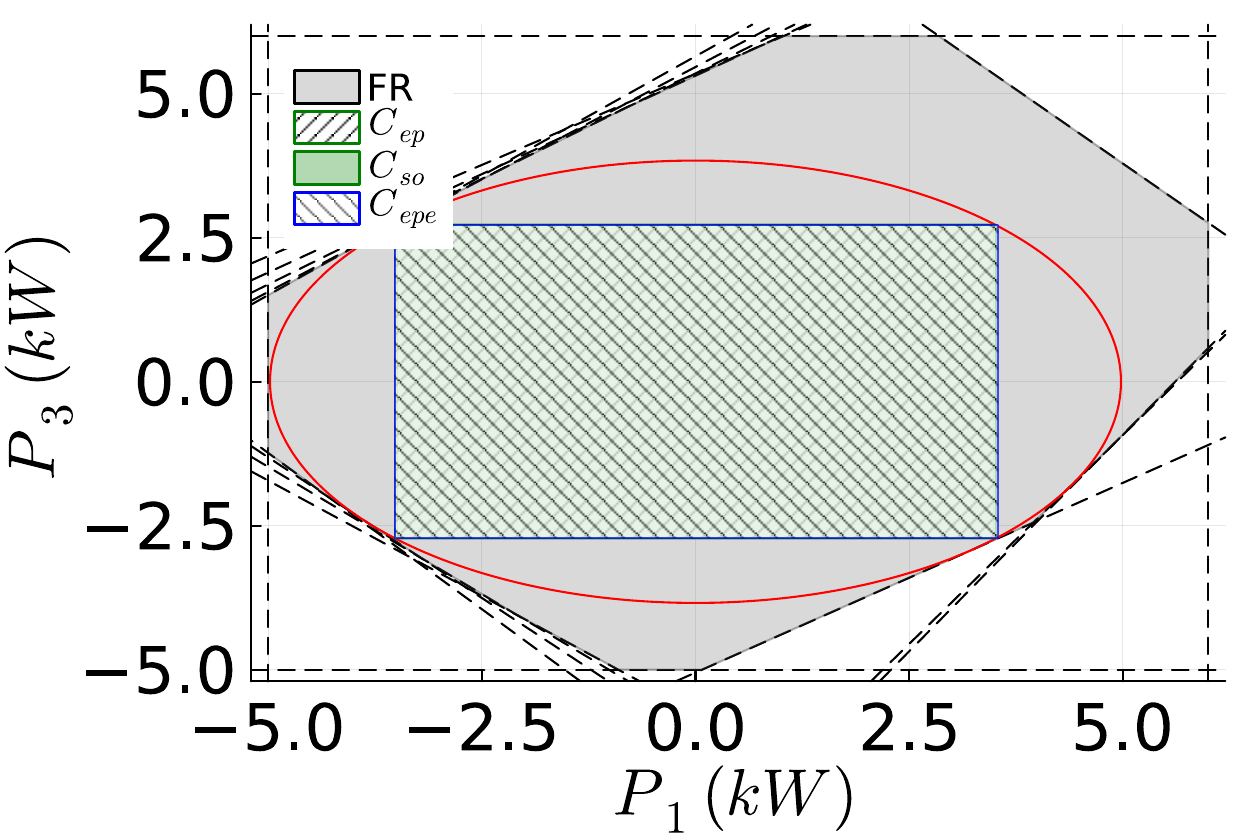}
		\caption{With optimised $Q$.}
		\label{fig_RDOE_2_bus_Qtrue_STStrue_hybrid_mix}
	\end{subfigure}
	\hfill
	\caption{Simulation results for the 2-bus illustrative network.}
	\label{fig_bus_2_system_basecase}
\end{figure}

\color{black}

\textbf{DOEs under a mix of customer operational statuses.} As seen from Fig.~\ref{fig_RDOE_2_bus_Qfalse_STStrue_hybrid_mix} and Fig.~\ref{fig_RDOE_2_bus_Qtrue_STStrue_hybrid_mix}, the permissible operational limits of a specific customer cannot be guaranteed to become better even when more resources become controllable. For example, the export limit for customer 3 becomes smaller in Fig.~\ref{fig_RDOE_2_bus_Qtrue_STStrue_hybrid_mix} compared with that in Fig.~\ref{fig_RDOE_2_bus_Qfalse_STStrue_hybrid_mix}, which implies it is critical to take customers' operational status into account to better utilise the network's available capacity. To investigate the potential benefit, in this case, customers 1 and 3 are assumed to be in various operational statuses under four scenarios: a) both are exporting powers; b) both are importing powers; c) customer 1 is importing power while customer 3 is exporting power; and d) customer 1 is exporting power while customer 3 is importing power. Simulation results are presented in Fig.~\ref{fig_bus_2_system_operational_status}, where the permissible operational limits calculated by the deterministic optimisation approach (DETmtd) are also presented. The DETmtd is used for comparison purposes and is realised by solving the following optimisation problem.
\begin{subequations}\label{detmtd}
    \begin{eqnarray}
        \label{detmtd_obj}
        \max_{p,q}{\sum\nolimits_{i}\alpha_i p_i}\\
        \label{detmtd_cons}
        \underline q\le q\le\bar q,~Ap+Bq+Cv=d,~Ev\le f
    \end{eqnarray}
\end{subequations}
where $\alpha_i$ indicates the operational status of customer $i$: $\alpha_i=1$ if it is importing power, and $\alpha_i=-1$ if it is exporting power.

\begin{figure}[htpb!]
	\centering
	\begin{subfigure}[b]{0.24\textwidth}
		\centering
		\includegraphics[width=\textwidth]{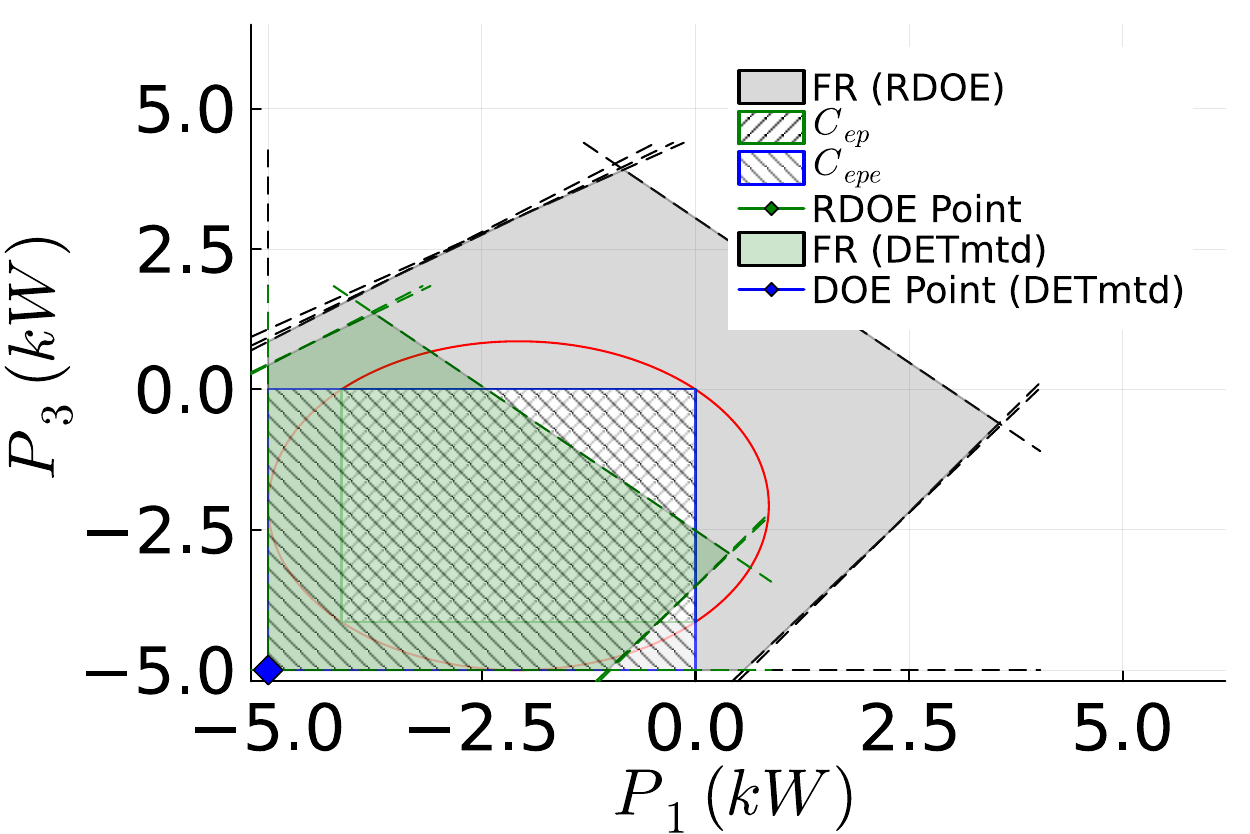}
		\caption{$P_1$: exporting, $P_3$: exporting}
		\label{fig_ee}
	\end{subfigure}
	\hfill
	\begin{subfigure}[b]{0.24\textwidth}
		\centering
		\includegraphics[width=\textwidth]{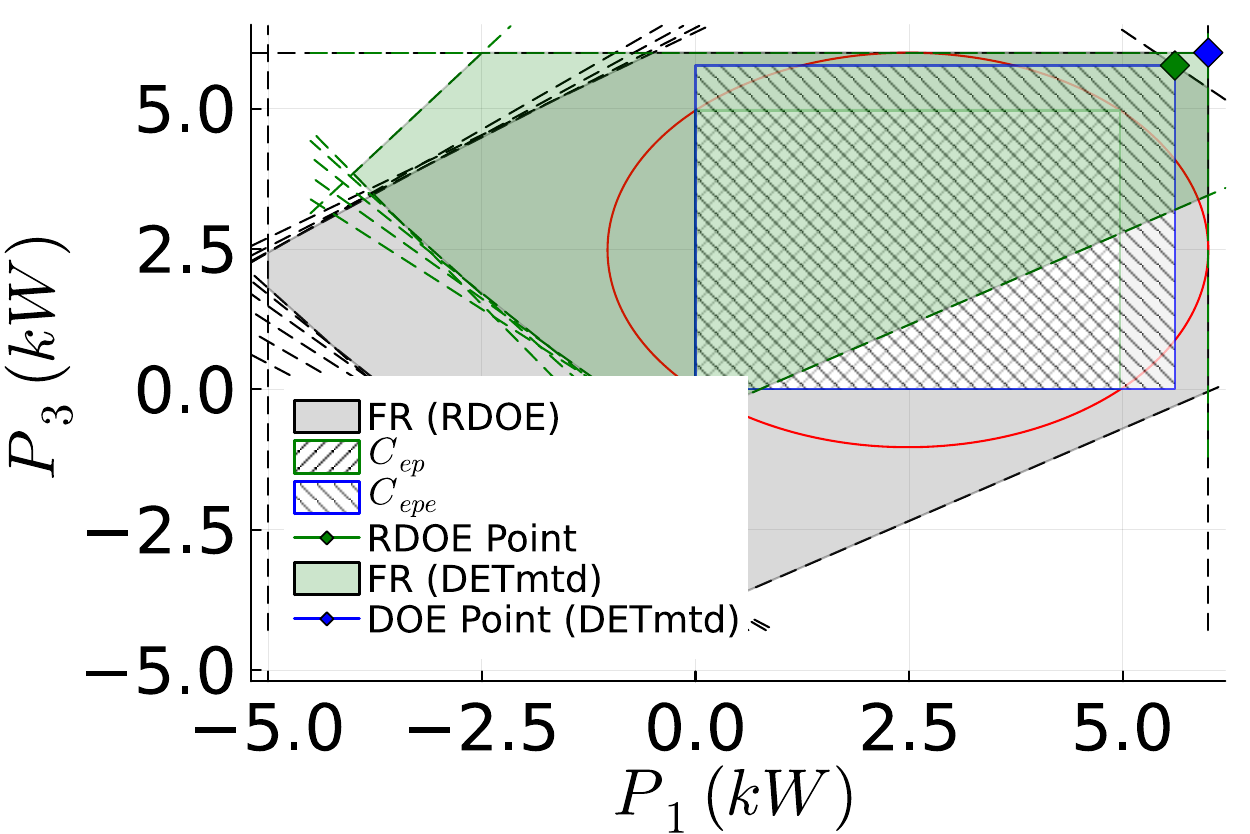}
		\caption{$P_1$: importing, $P_3$: importing}
		\label{fig_ii}
	\end{subfigure}
	\hfill
	\begin{subfigure}[b]{0.24\textwidth}
		\centering
		\includegraphics[width=\textwidth]{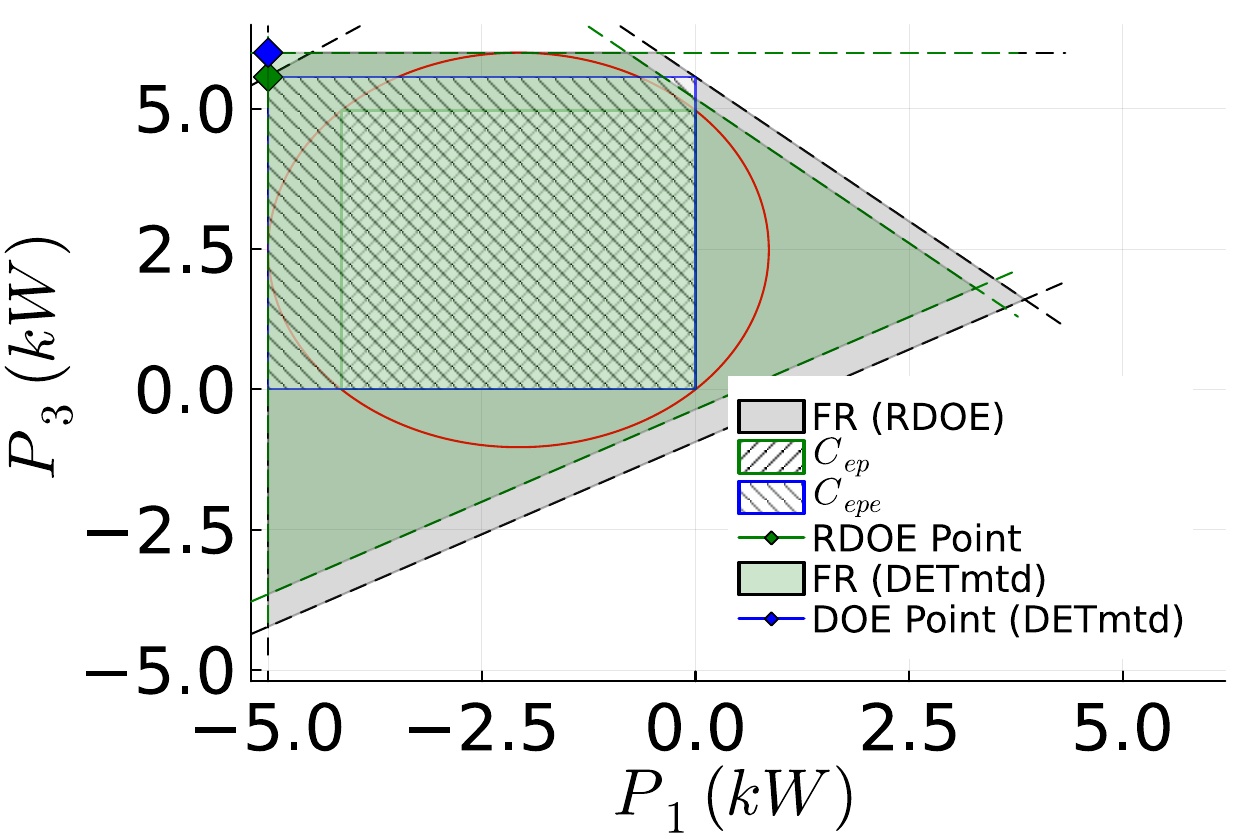}
		\caption{$P_1$: exporting, $P_3$: importing}
		\label{fig_ie}
	\end{subfigure}
	\hfill
	\begin{subfigure}[b]{0.24\textwidth}
		\centering
		\includegraphics[width=\textwidth]{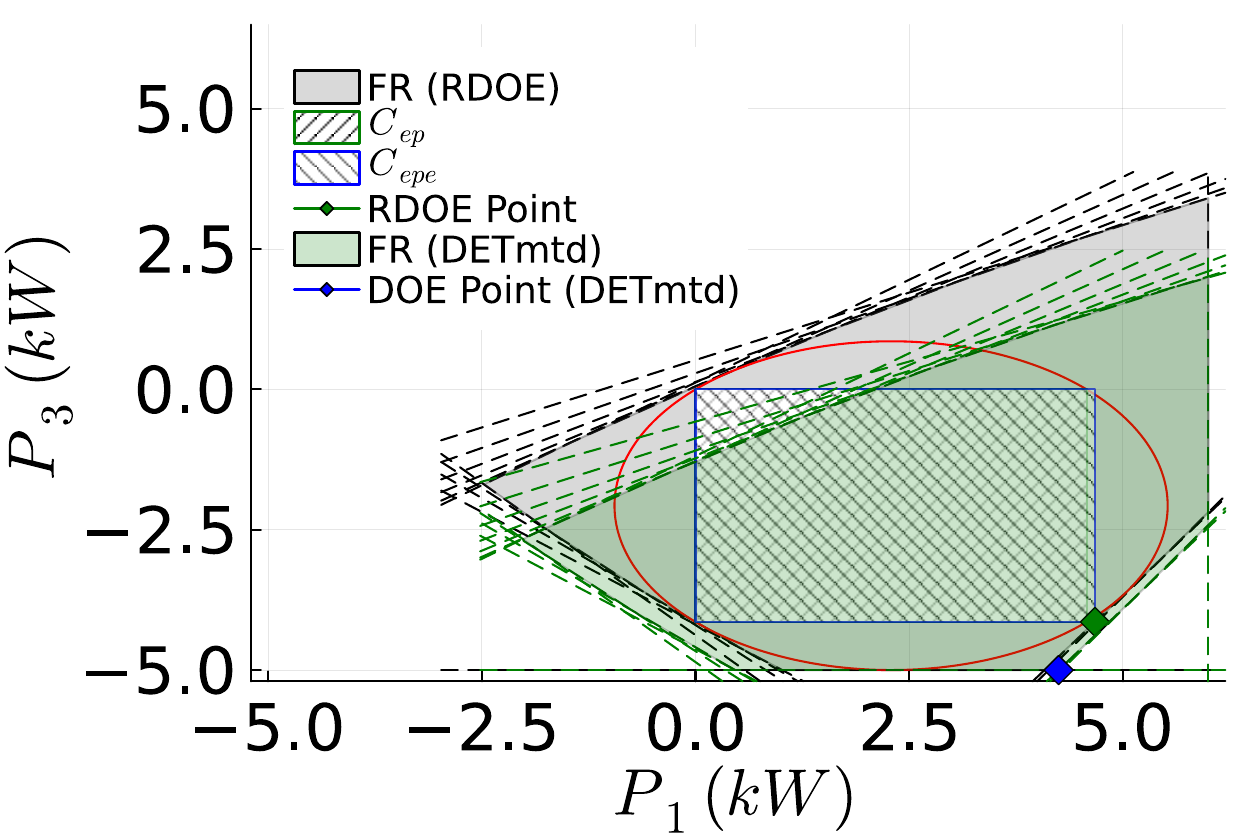}
		\caption{$P_1$: importing, $P_3$: exporting}
		\label{fig_ei}
	\end{subfigure}
	\hfill
	\caption{Simulation results for the 2-bus illustrative distribution network under various operational status of customers (``DET'' indicates the results for the DETmtd).}
	\label{fig_bus_2_system_operational_status}
\end{figure}

The proposed approach and the DETmtd reported different dispatched reactive powers, thus leading to different FRs and DOEs. Taking Fig.~\ref{fig_ii}, where both customers 1 and 3 are importing powers from the grid, as an example, the import limits for customer 1/3 reported by the proposed approach and the DETmtd are 5.21 kW/5.36 kW and 6.00 kW/6.00 kW, respectively. Although DETmtd achieves higher allocated permissible operational limits for both customers, operational violations are unavoidable when $P_3$ decreases its power demand to around 2.5 kW or a lower value while $P_1$ is kept at 6 kW. Similar issues will also arise for other operational status scenarios, underlining the necessity of seeking more reliable DOEs. By contrast, DOEs calculated from the proposed approach are more robust, and each customer can freely change their loads or generations within allocated DOEs without causing operational violations to the network.

\subsection{The real 33-bus Australian Network}\label{case_aus}
\textbf{A mix of operational statuses known for all active customers.}
For this case, of all the 30 active customers, 15 of them are exporting powers (customers: ``2", ``10", ``12", ``14", ``16", ``18", ``21", ``23", ``25", ``27", ``29", ``30", ``32", ``34", ``36") while another 15 of them (customers: ``1", ``3", ``11", ``13", ``15", ``17", ``19", ``20", ``22", ``24", ``26", ``28", ``31", ``33", ``35") are importing powers in the studied interval (\emph{mix scenario}).
For the real 33-bus Australian Network, the algorithm, \changed{on a laptop with Intel(R) Core(TM) i7-8550U CPU and 16 GB RAM, spends 21.03 seconds and 34.81 seconds, respectively, in solving \eqref{dfr_status} by \texttt{Ipopt} \cite{ipopt} and in removing the redundant constraints by \texttt{Xpress} (version 41.01.01) \cite{xpress} to update the FR. The latter process removes 94.67\% of the initial inequalities from the number 2,328 to 124, which could significantly reduce the computational complexity when further expanding the initially identified DFR. Expanding the initially identified DFR takes another 6.58 seconds with the solver \texttt{Ipopt}, leading to a total computational time of 62.42 seconds. By contrast, when the solver \texttt{Knitro} is used, the computational efficiencies in the first and third steps can be improved significantly, as shown in Table \ref{tab_case_study}.} As over 50\% of time is spent on removing the redundant constraints, where the redundancy of each constraint can be checked simultaneously, parallel computing techniques can be applied to further improve the computational efficiency, which falls within our future research interest. 

\begin{figure}[htb]
	\centering\includegraphics[scale=0.27]{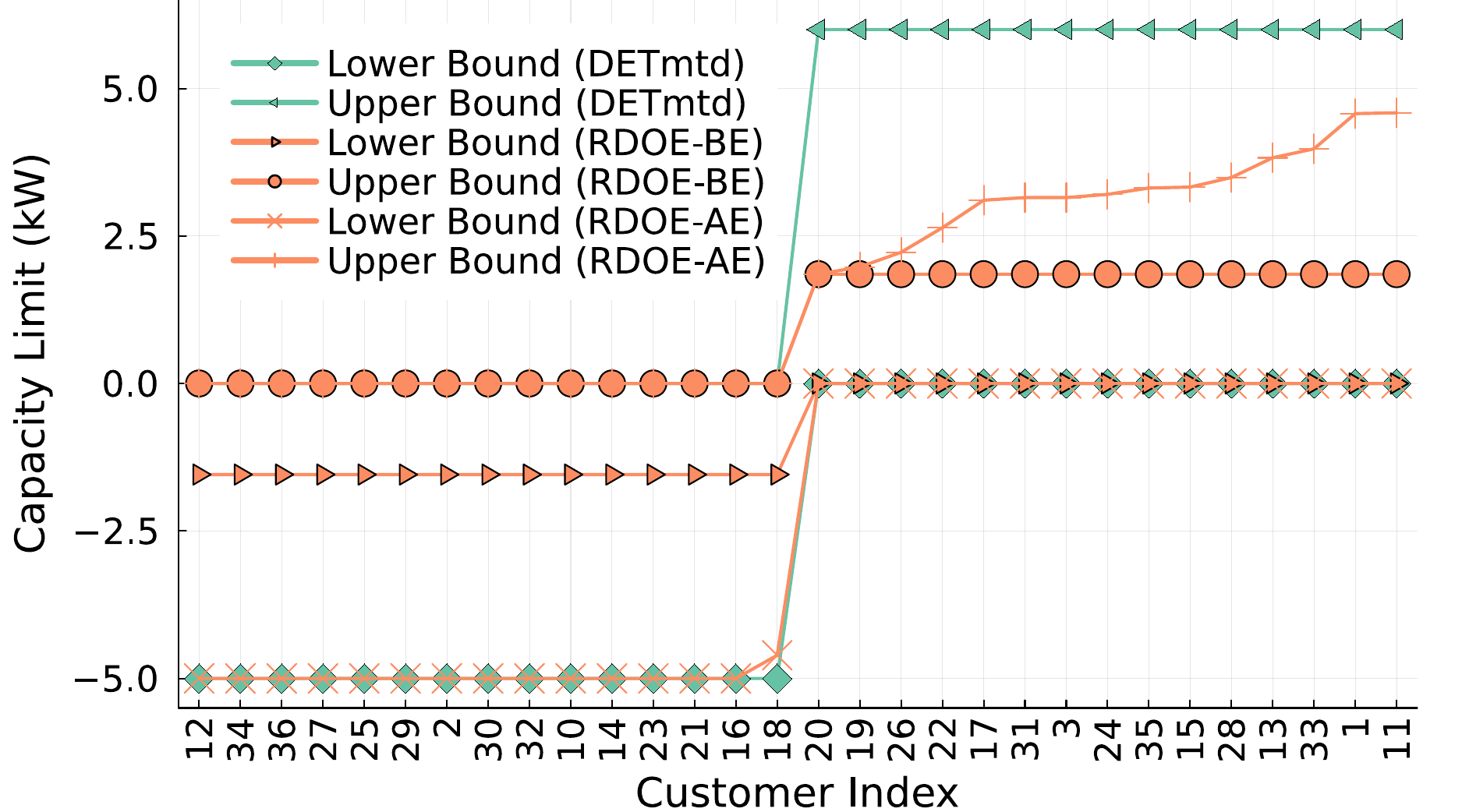}
	\caption{DOEs for \emph{mix scenario} in the 33-bus Australian Network (BE/AE: before/after expansion in the final step).}
	\label{fig_RDOE_aus_J_DOEnum_30_opt_ellipsoid_hybrid_withQtrue}
\end{figure}

\begin{table}[htbp!]\footnotesize\renewcommand\arraystretch{1}
	\centering
	\setlength{\tabcolsep}{0.5pt}
	\caption{Computational time in seconds for calculating DOEs (Imp./Exp./Mix/Ukn Sce.: Import/Export/Mix/Unknown Scenario; S1/S2/S3: Seeking initial hyperrectangle/Removing redundant constraints/Expanding initial hyperrectangle).}
	\begin{tabular}{cc|c|c|c|c|c}
		\hline\hline 
		\multicolumn{2}{c|}{\multirow{2}{*}{Solver/Case}}  & \multicolumn{4}{c|}{33-bus Australian Network}  & \multicolumn{1}{c}{\makecell{132-bus\\Synthetic Network}}  \\ \cline{3-7}
                                                & &  Imp. Sce. & Exp. Sce. & Mix Sce. & Ukn. Sce. & Mix Sce. \\ \hline
                                                 \multirow{3}{*}{\makecell{\texttt{Ipopt/Xpress}\\(RDOE)}}   & S1&  91.59 & 85.87& 21.03 & 11.79 & -- \\ \cline{2-7}   
                                                 & S2&  39.72 & 23.43& 34.81 & 29.69  & -- \\ \cline{2-7} 
                                                 & S3&  4.03 & 5.39& 6.58 & 5.99  & -- \\ \hline 
                                                 \multirow{3}{*}{\makecell{\texttt{Knitro/Xpress}\\(RDOE)}}   & S1&  6.41 & 7.04& 5.72 & 6.82 & 290.22 \\ \cline{2-7}   
                                                 & S2&  42.45 & 31.64& 34.6 & 32.83  & 379.49 \\ \cline{2-7} 
                                                 & S3&  1.05 & 1.31& 1.22 & 1.01  & 19.47 \\ \hline 
                                                 \multicolumn{2}{c|}{\texttt{Xpress} (DETmtd)} &  0.60 & 0.53& 0.48 & 0.50 & 5.49 \\ \hline\hline
	\end{tabular}
	\label{tab_case_study}
\end{table}

Simulation results, i.e. the export and import limits for all customers before and after expansion, together with DOEs calculated by DETmtd, are presented in Fig.~\ref{fig_RDOE_aus_J_DOEnum_30_opt_ellipsoid_hybrid_withQtrue}. The export limits calculated from the proposed approach are slightly conservative, demonstrating the necessity to expand the initially identified DFR further. By contrast, DETmtd allocates export and import limits at the maximum values, which is an over-optimistic result and, as discussed previously, significant operational violations may occur when customers' powers deviate from permissible operational limits. It is also noteworthy that both the allocated export and import limits could be much higher if default export/import limits are set at larger values other than 5 kW/6 kW. 

To further demonstrate the effectiveness of the proposed approach, the calculated DOEs are assessed based on UTPF through \texttt{PowerModelsDistribution.jl} against the deterministic approach with randomly load or generation scenarios generated by the following steps with $k$ ranging from 1 to $n$, where $k$ indicates the number of customers with varying powers.
\begin{enumerate}
	\item For each $k$, randomly generate 100 load or generation scenarios, where each of them is produced as follows.
	      \begin{enumerate}
		      \item Randomly select $k$ active customers, where each of them will be associated with a randomly generated realisation factor (RF), where $0\le \text{RF}\le 1$.
		      \item For each customer, its realised power is set as: the calculated DOEs multiplying the RF.
	      \end{enumerate}
	\item For each load scenario, run UTPF with reactive powers of active customers fixed at their optimised values.
	\item Record all nodal voltage magnitudes under each $k$.
\end{enumerate}

\begin{figure}[htb!]
	\centering\includegraphics[scale=0.30]{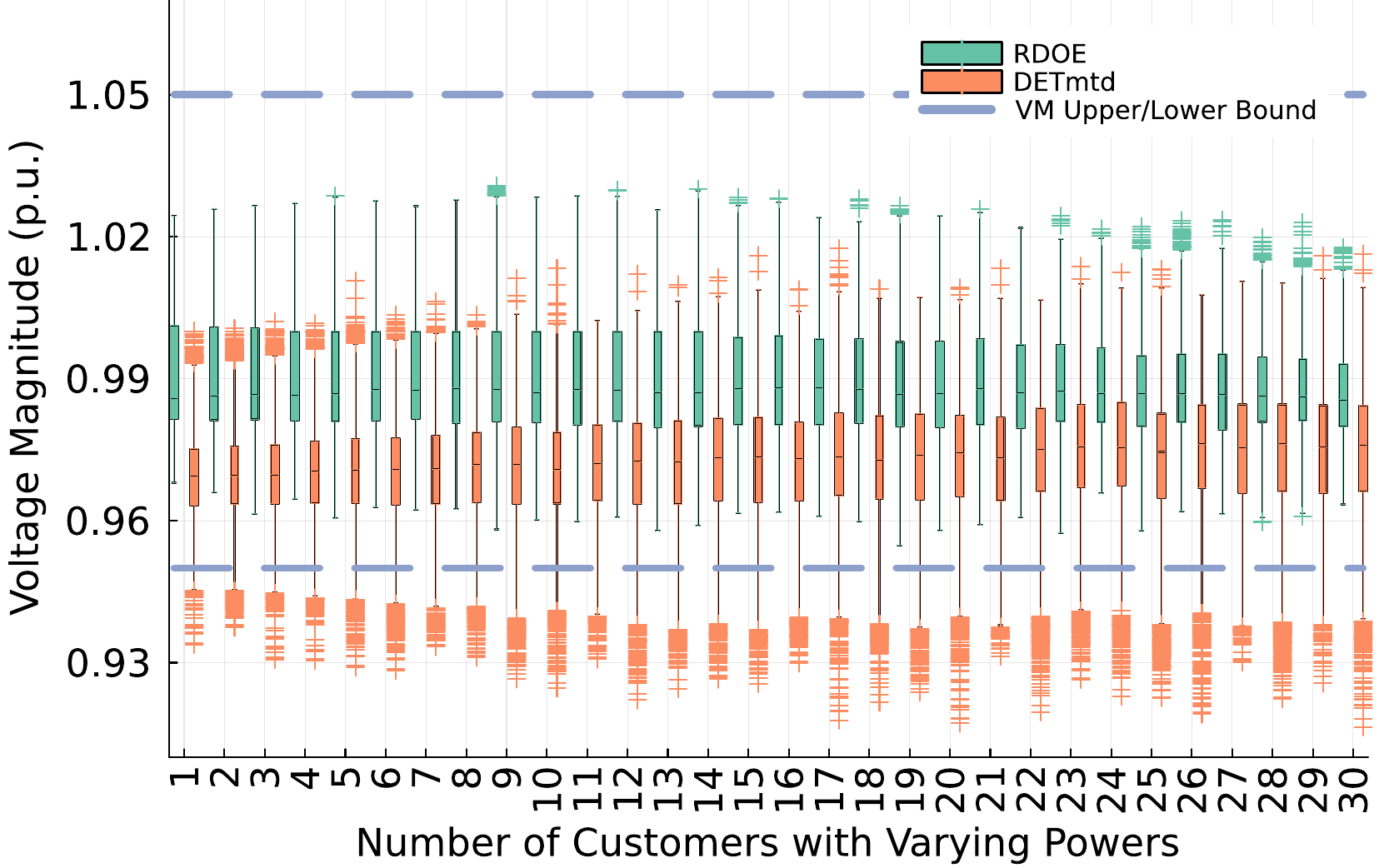}
	\caption{Assessment of DOEs at the \emph{mix scenario} in the 33-bus Australian Network.}
	\label{fig_PFcompare_aus_J_opt_ellipsoid_and_DETmtd_hybrid_withQtrue}
\end{figure}

Simulation results are presented in Fig.~\ref{fig_PFcompare_aus_J_opt_ellipsoid_and_DETmtd_hybrid_withQtrue} by box plots. With varying loads or generations from active customers, voltage magnitudes all fall within the security limits for the proposed approach, while significant under-voltage violations are observed for the deterministic approach, which demonstrates the effectiveness and robustness of the calculated RDOEs.

\color{black}
\textbf{All active customers being at \emph{export}/\emph{import} operational statuses.}
For this case, all active customers are assumed to be exporting/importing powers. The export and import limits acquired by calculating RDOEs for the case when all active customers are exporting powers (\emph{export scenario}) and for the case when all of them are importing powers (\emph{import scenario}) are presented in Fig. \ref{fig_RDOE_aus_J_DOEnum_30_opt_ellipsoid_export_withQtrue} and Fig. \ref{fig_RDOE_aus_J_DOEnum_30_opt_ellipsoid_import_withQtrue}, respectively, and the computational times are presented in Table \ref{tab_case_study}.

\begin{figure}[htbp!]
	\centering\includegraphics[scale=0.27]{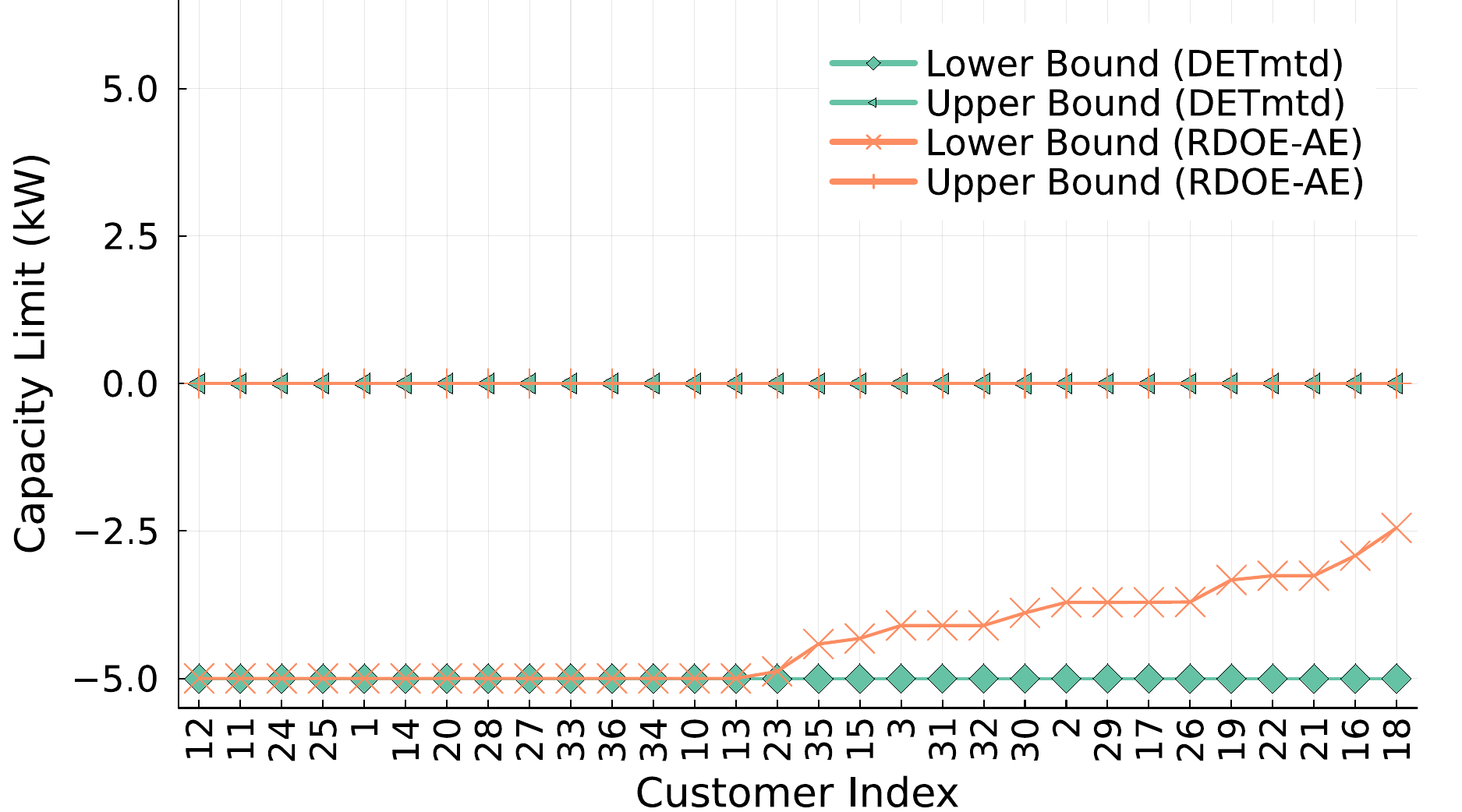}
	\caption{DOEs for the \emph{export scenario} in the 33-bus Australian Network.}
	\label{fig_RDOE_aus_J_DOEnum_30_opt_ellipsoid_export_withQtrue}
\end{figure}
\begin{figure}[htb]
	\centering\includegraphics[scale=0.27]{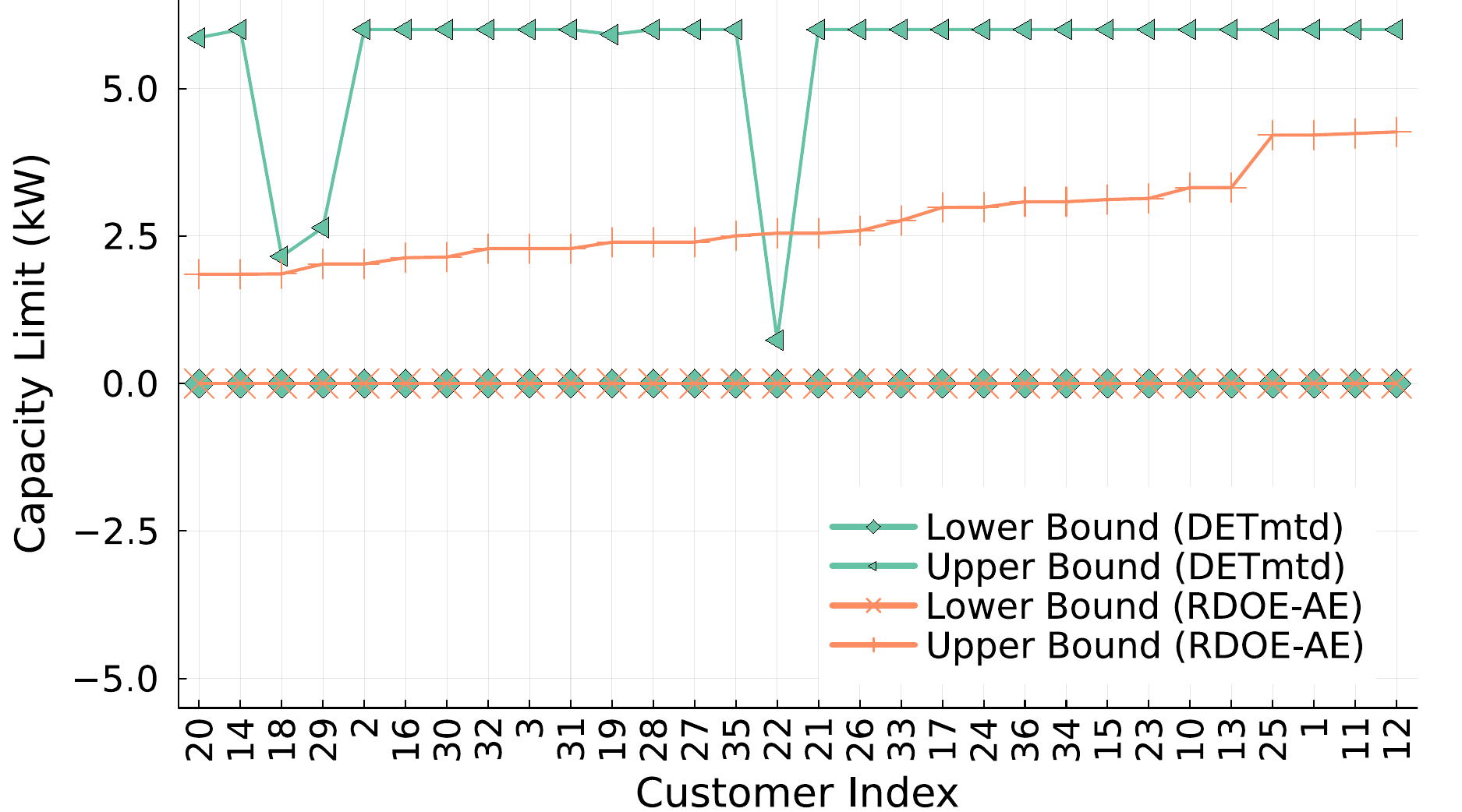}
	\caption{DOEs for the \emph{import scenario} in the 33-bus Australian Network.}
	\label{fig_RDOE_aus_J_DOEnum_30_opt_ellipsoid_import_withQtrue}
\end{figure}

Compared with the robust approach, the export limits for all customers under the \emph{export scenario} are optimised to the default limit of 5 kW, and the import limits under the \emph{import scenario} are optimised to the default limit of 6 kW for most of the customers, when the deterministic approach is used. 

\begin{figure}[htb!]
	\centering\includegraphics[scale=0.30]{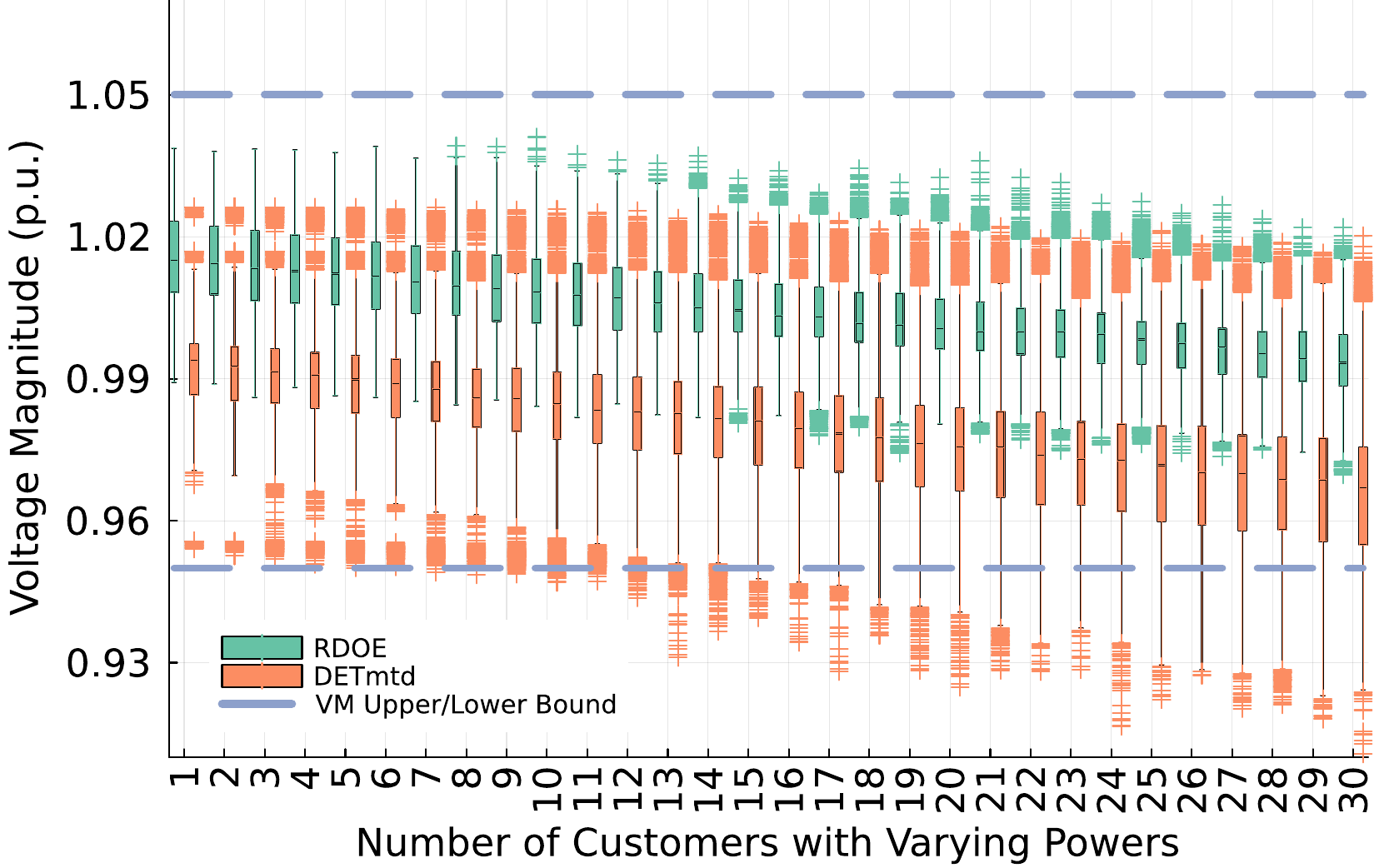}
	\caption{Assessment of DOEs at the \emph{export scenario} in the 33-bus Australian Network.}
	\label{fig_PFcompare_aus_J_opt_ellipsoid_and_DETmtd_export_withQtrue}
\end{figure}
\begin{figure}[htb!]
	\centering\includegraphics[scale=0.30]{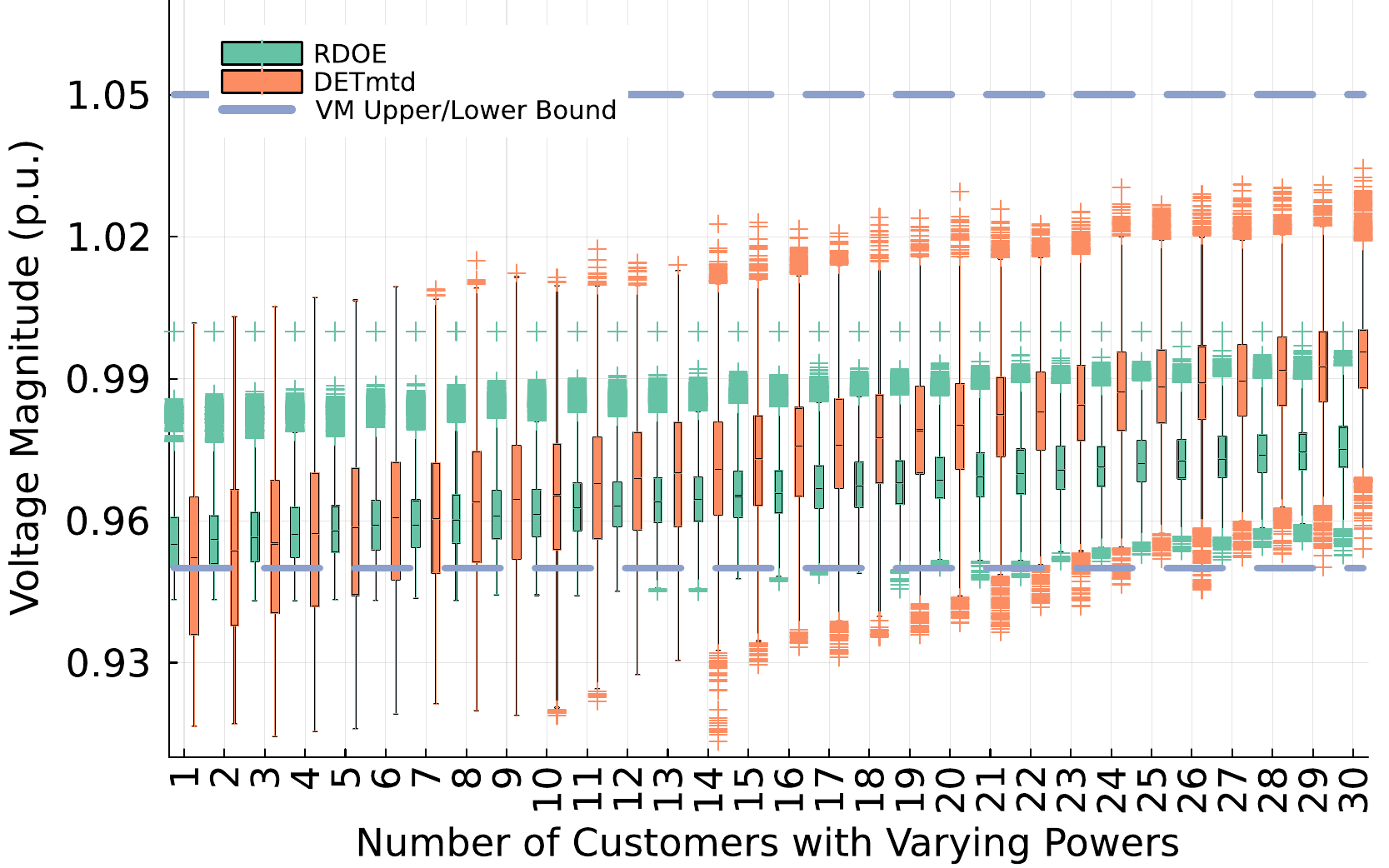}
	\caption{Assessment of DOEs at the \emph{import scenario} in the 33-bus Australian Network.}
	\label{fig_PFcompare_aus_J_opt_ellipsoid_and_DETmtd_import_withQtrue}
\end{figure}

Similar to the previous scenario, the DOEs are further assessed by running exact UTPF based on randomly generated load or generation profiles within the calculated capacity limits, either by the robust approach or the deterministic approach. Simulation results presented in Fig. \ref{fig_PFcompare_aus_J_opt_ellipsoid_and_DETmtd_export_withQtrue} and Fig. \ref{fig_PFcompare_aus_J_opt_ellipsoid_and_DETmtd_import_withQtrue} clearly show that the robust approach reports more reliable DOEs compared with the deterministic approach noting that voltage violations occur more frequently as the number of customers with varying powers increases. It is also noteworthy that due to inevitable errors in linearising the UTOPF model, voltage violations, although much smaller compared with the deterministic approach, also occur under the \emph{import scenario} when RDOEs are used. 

\textbf{Operational statuses unknown for all active customers.}
For this case (\emph{unknown scenario}), we assume that the operational statuses of the 30 active customers are unknown and the optimisation model \eqref{dfr_status} with the objective function \eqref{mod_cons_01_revised} will be used to calculate the RDOEs. For comparison purposes, the objective function in \eqref{detmtd} will be replaced by $\max_{p,q}{\sum\nolimits_{i} p_i}$ and $\min_{p,q}{\sum\nolimits_{i} p_i}$ to calculate the import and export limits respectively, which is an approach proposed in \cite{Liu2022_doe}. Moreover, we assume each of the controllable reactive powers is fixed at 0 kvar, and all active customers with the same export and import limits for the deterministic approach to avoid conflicting results, which, for example, may include reactive power dispatch strategies being different to achieve best import and export limits, and the import limit being less than the export limit for some customers. 

\begin{figure}[htb]
	\centering\includegraphics[scale=0.27]{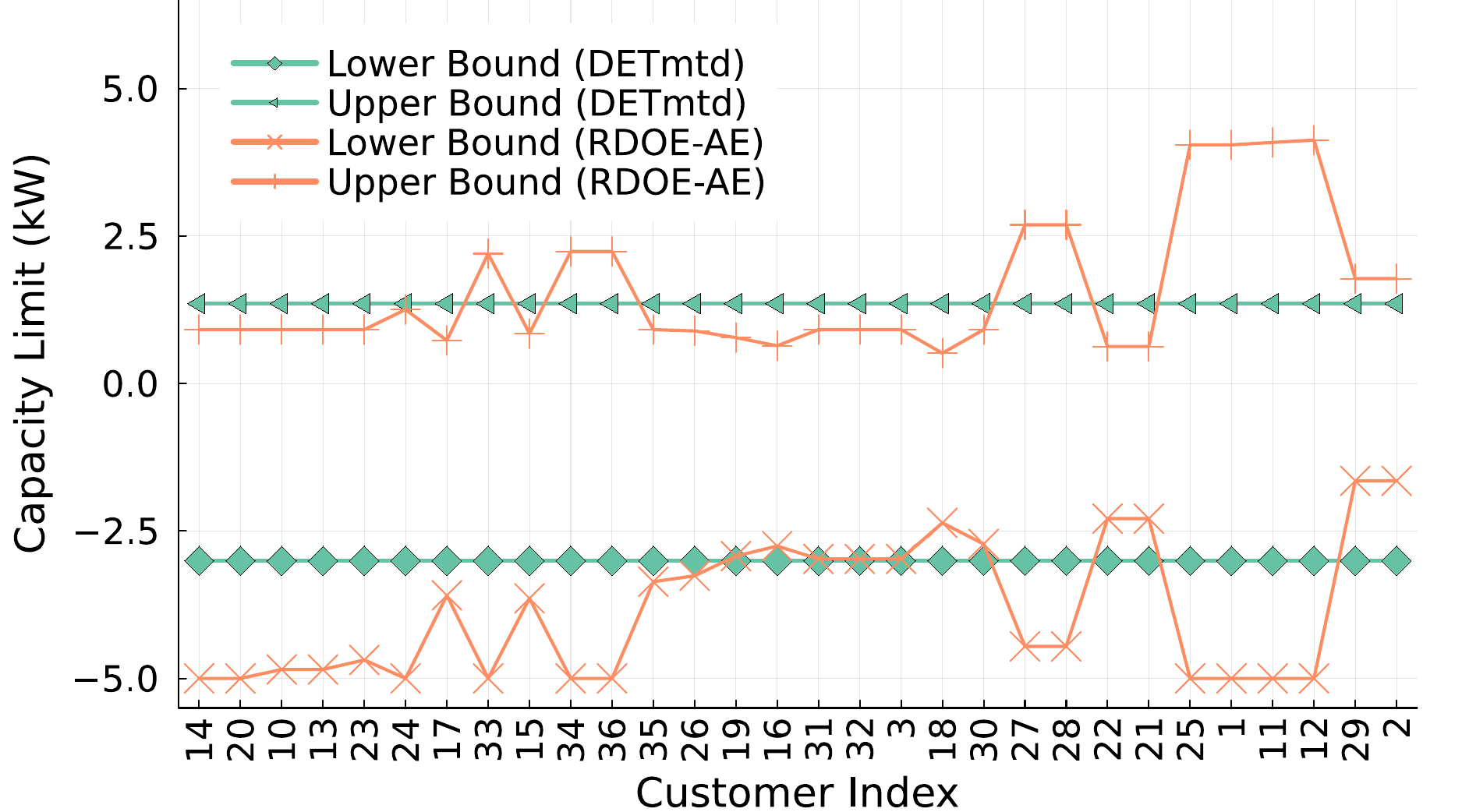}
	\caption{DOEs for the \emph{unknown scenario} in the 33-bus Australian Network.}
	\label{fig_RDOE_aus_J_DOEnum_30_opt_ellipsoid_hybrid_mix_withQfalse}
\end{figure}

Capacity limits calculated by the deterministic and robust approaches are presented in Fig. \ref{fig_RDOE_aus_J_DOEnum_30_opt_ellipsoid_hybrid_mix_withQfalse} and computational time is presented in Table \ref{tab_case_study}. The total DOEs, calculated as $\sum_i{|p^-_i|}+\sum_i{|p^+_i|}$, reported by the robust approach is 162.66 kW, which increases by 24.37\% from 130.79 kW, the value reported by the deterministic approach.

\begin{figure}[htb!]
	\centering\includegraphics[scale=0.30]{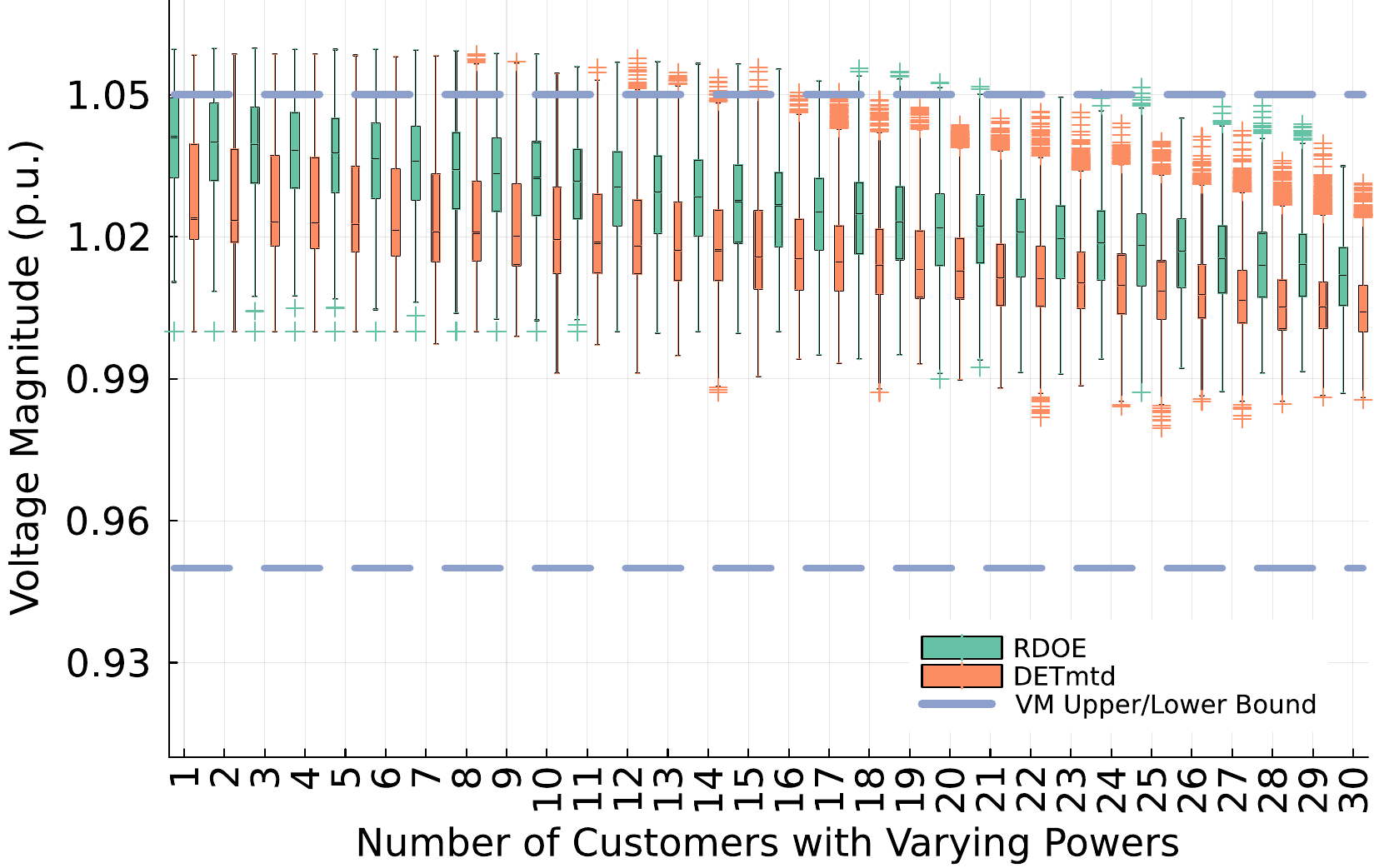}
	\caption{Assessment of DOEs for the \emph{unknown scenario} in the 33-bus Australian Network (variations starting from lower bound).}
	\label{fig_PFcompare_aus_J_opt_ellipsoid_and_DETmtd_hybrid_mix_withQfalse_fromLBtrue}
\end{figure}
\begin{figure}[htb!]
	\centering\includegraphics[scale=0.30]{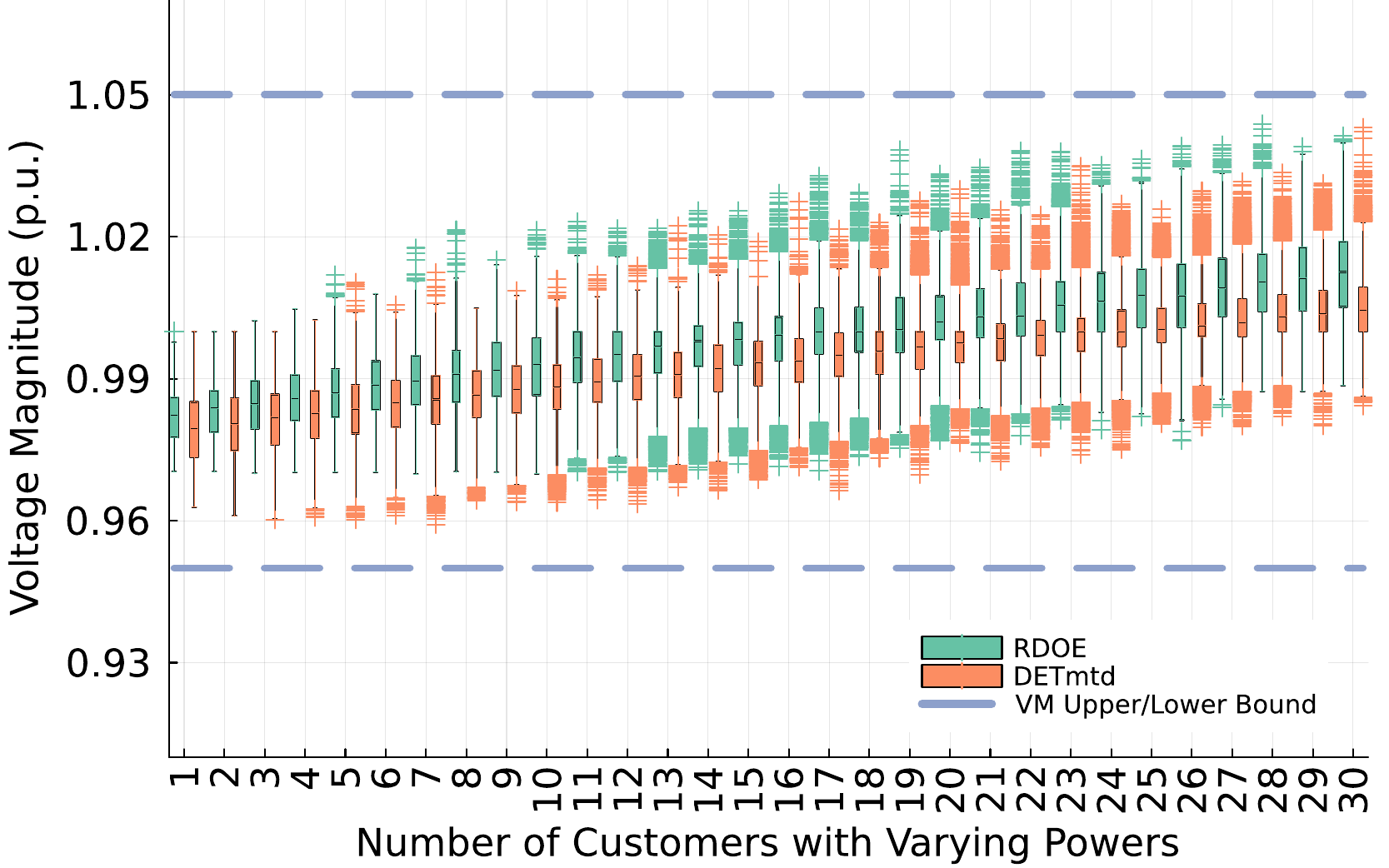}
	\caption{Assessment of DOEs for the \emph{unknown scenario} in the 33-bus Australian Network (variations starting from upper bound).}
	\label{fig_PFcompare_aus_J_opt_ellipsoid_and_DETmtd_hybrid_mix_withQfalse_fromLBfalse}
\end{figure}

The calculated DOEs are further assessed by running exact UTPF based on randomly generated load or generation scenarios. As the simulation results are presented with various $k$, the number of customers with varying powers, two approaches in generating random scenarios are used, where the variations of loads or generations of the $k$ customers will start from the \emph{lower} and \emph{upper} capacity limits, respectively. 
Simulation results are presented in Fig. \ref{fig_PFcompare_aus_J_opt_ellipsoid_and_DETmtd_hybrid_mix_withQfalse_fromLBtrue} and Fig. \ref{fig_PFcompare_aus_J_opt_ellipsoid_and_DETmtd_hybrid_mix_withQfalse_fromLBfalse}, where the two approaches experience a similar percentage of under-voltage violation in the former case, and no voltage violation issues are observed in the latter case, leading to an overall better performance of the robust approach noting that its total DOE is higher. 

\subsection{The 132-bus Synthetic Network}
The 132-bus Synthetic Network is studied to further test the scalability and effectiveness of the proposed approach. However, when solving \eqref{dfr_status} and \eqref{expanding_dfr_cvx}, the solver \texttt{Ipopt} could not report any feasible solution after a long period of time due to the size of the optimisation problem and the capability of the solver itself. We alternately used the solver \texttt{Knitro}, optimal solutions were reported after 290.22 seconds and 19.47 seconds for solving \eqref{dfr_status} and \eqref{expanding_dfr_cvx}, respectively. Together with the time spent on removing redundant constraints, which is 379.49 seconds, the total computational time in calculating RDOEs is 689.18 seconds, demonstrating the practicality of applying the proposed approach in a medium to a large-scale distribution network if DOEs are to be updated day-ahead, every 15 minutes, hourly or every several hours in intraday operation. The calculated DOEs from both the robust and the deterministic approaches are presented in Fig. \ref{fig_RDOE_aus_J_large_4_fold_DOEnum_116_opt_ellipsoid_hybrid_withQtrue}, where the latter approach again reports an over-optimistic strategy that allocates the default limits for all active customers. 
\begin{figure}[htb]
	\centering\includegraphics[scale=0.30]{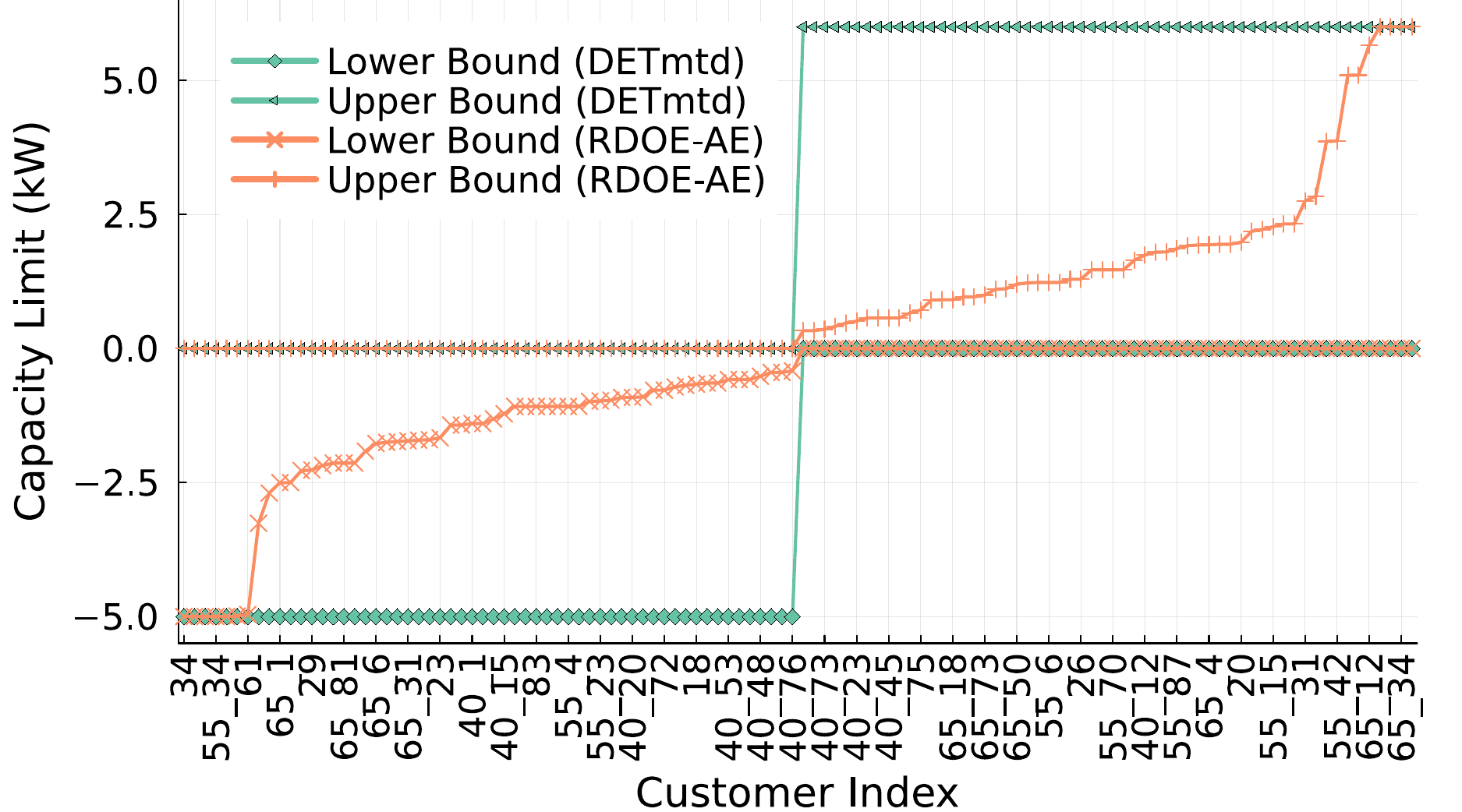}
	\caption{DOEs for the 132-bus Synthetic Network (names of active customers on the $x$-axis are partially presented).}
	\label{fig_RDOE_aus_J_large_4_fold_DOEnum_116_opt_ellipsoid_hybrid_withQtrue}
\end{figure}
\begin{figure}[htb]
	\centering\includegraphics[scale=0.30]{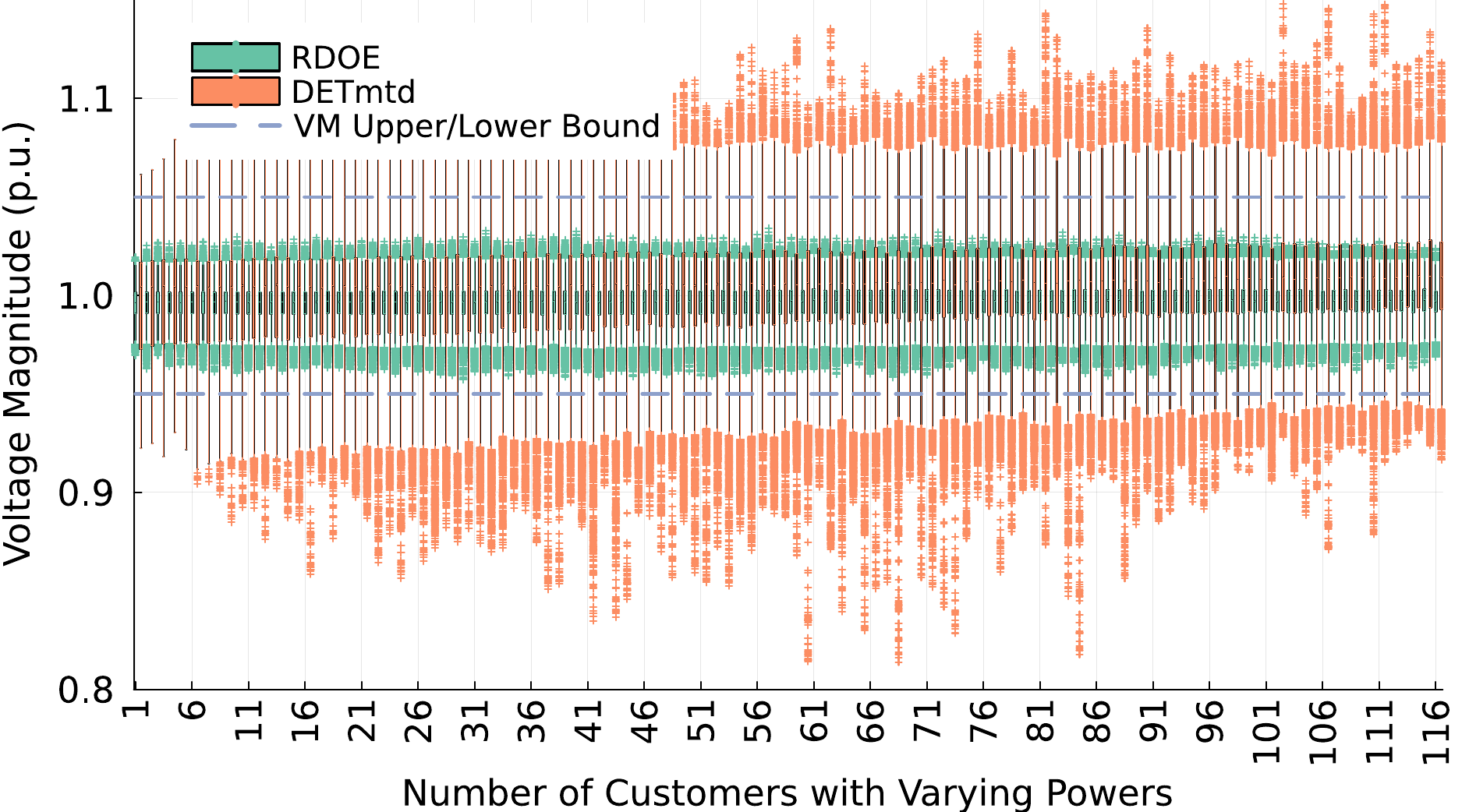}
	\caption{Assessment of DOEs for the 132-bus Synthetic Network.}
	\label{fig_PFcompare_aus_J_large_4_fold_opt_ellipsoid_and_DETmtd_hybrid_withQtrue}
\end{figure}

The calculated DOEs are further assessed by running UTPF and simulation results are presented in Fig. \ref{fig_PFcompare_aus_J_large_4_fold_opt_ellipsoid_and_DETmtd_hybrid_withQtrue}, which clearly shows that the network will experience significant voltage violations with DOEs calculated by deterministic approach while no voltage violations issues are observed with DOEs calculated by the proposed approach, again demonstrating the effectiveness of RDOEs in dealing with uncertain load variations of active customers. 

It is noteworthy that since it may take more than 10 minutes to calculate RDOEs for a large network, the frequency of updating DOEs may need to be adjusted in order to allow for enough computational time for the proposed approach, e.g. from every 5 minutes, as discussed previously, to every 15 or 30 minutes or hourly, in intraday operation. 

\color{black}

\section{Conclusion}\label{sec_05}
This paper studies DOEs calculation for DER integration in unbalanced distribution networks and presents a deterministic procedure for calculating RDOEs against uncertainty and variability from active customers' loads and generations. To address computational issues, a constructive approach consisting of three steps is proposed. The capability of the proposed capacity allocation in delivering proportional fairness is analysed, and knowledge of customers' operational statuses and controllability of reactive powers are also exploited as extensions to mitigate conservatism in the proposed RDOEs. Simulation results based on two unbalanced distribution networks have demonstrated the efficiency and compliance of the proposed approach.

We note that the proposed approach relies on the geometry of the approximate linear UTOPF formulation, which enables the underlying geometric constructions and computational efficiency at the expense of model accuracy. However, we posit that the proposed approach could be adapted when the FR, either from a linear UTOPF model or calculated based on an exact UTOPF model \cite{Riaz2022}, is available. 

Further work to minimise the conservatism of the approach could target the remaining network capacity within the FR beyond the RDOEs, which could be exploited through coordinated operation DER strategies.  Other avenues for extending the present work include a comparative study of various objective functions reflecting different ways of measuring fairness; incorporating grid-side controllable devices to maximise available capacity; considering other sources of uncertainty, such as, for example, in network parameters,  in forecasting demand of passive customers.

\color{black}
Exploring alternative RDOE formulations and solution algorithms based on exact non-convex UTOPF models and structured uncertainty characterisations remains an open problem for future research.  Although this paper focused on the calculation of RDOEs based on a linear UTOPF model, calculating RDOEs based on an exact non-convex UTOPF model, although much more complicated and challenging, is also important and worth further exploration.  Appendix~\ref{appendix_roe} provides a further discussion on anticipated challenges in developing one such formulation and solution algorithms to calculate RDOEs based on a non-convex UTOPF model.

However, other than the discussed approach in Appendix \ref{appendix_roe}, other possible approaches, although challenging for large systems with many active customers, to calculate RDOEs based on exact non-convex UTOPF could be:
\begin{enumerate}
    \item Stochastic optimisation (SO)-based approach by ensuring all extreme points of a hyperrectangle are within the non-convex FR, which, however, can become intractable due to the high complexity of enumerating and considering all extreme points of the hyperrectangle, as discussed in Section \ref{rdoe-01}.
    \item The approach where the FR is calculated based on exact non-convex UTOPF before calculating the RDOEs and one of the possible approaches to calculating such an FR is the method presented in \cite{Riaz2022}. However, calculating the FR itself based on the exact non-convex UTOPF can be very challenging. 
\end{enumerate}

\color{black}
\appendix
\section{Appendix}
\subsection{Largest hyperrectangle inscribed in a hyperellipsoid}\label{sec-eq-ellipsoid}
This section shows that maximising the volume of a hyperellipsoid $\mathcal{E} =\{Lx+w \big| \|x\|_2\le 1\}$, where $x,w\in\mathbb{R}^n$ and $L\in\mathbb{R}^{n\times n}$ is a positive definite diagonal matrix, is equivalent to maximising the volume of its largest inscribed hyperrectangle $\mathcal{C}_{ep}$.  

Since a vertex $z$ of $\mathcal{C}_{ep}$ is on the boundary of $\mathcal{E}$, we have $z=Lx+w$, where $||x||_2=1$ and $w$ is the centre of both $\mathcal{E}$ and $\mathcal{C}_{ep}$.  

The volume of $\mathcal{C}_{ep}$ is given by 
\begin{equation}
	\label{eq-prof-01}
	V_c=2^n\prod_i |z_i-w_i|=2^n\prod_i |L_{ii}x_i|=2^n\prod_i L_{ii}\prod_i |x_i|
\end{equation}
where $z_i$ and $w_i$ $(i=1,2,\cdots, n)$ are the elements of the vectors $z$ and $w$.  It can be shown by combining the theorem of the arithmetic and geometric means \cite[Fact~1.17.14]{bernstein09:_matrix} and Cauchy's inequality \cite[Fact~1.17.3]{bernstein09:_matrix} that
$(\prod_i |x_i|)^{{1}/{n}} < (\frac{1}{n} \sum_i x_i^2 )^{{1}/{2}}$
unless all $x_i$ are equal, say to a value $x^*$, in which case
\begin{equation*}
  x^* = \max_{x:\|x\|_2=1}\biggl(\prod_i |x_i|\biggr)^{{1}/{n}} =\biggl(\frac{1}{n} \|x\|_2^2 \biggr)^{{1}/{2}} = \frac{1}{\sqrt{n}}
\end{equation*}
where we have used $\prod_i |x_i| = (x^*)^n$ and $\|x\|^2_2 = \sum_i x_i^2 = 1$.
Then it follows from \eqref{eq-prof-01} that
 \begin{equation*}
  \max_{x:\|x\|_2=1}V_c = \left({2}/{\sqrt{n}}\right)^n\det(L)
 \end{equation*}
which shows that the volume of the largest hyperrectangle $\mathcal{C}_{ep}$ inscribed in the hyperellipsoid $\mathcal{E}$ is proportional to $\det(L)$, which in turn is proportional to the volume of $\mathcal{E}$ \cite[Fact 3.7.35]{bernstein09:_matrix}.

\subsection{Checking if a polyhedron contains another one}\label{appendix_01}
The approach is based on the Motzkin Transposition Theorem (MTT) \cite{Ben_Israel2001}, which is stated as follows.
\begin{theorem}[MTT]\label{MTT}
  Given the matrices $A,B$ and vectors $b,c$, the following two statements are equivalent.
	\begin{enumerate}
		\item The system of equations $Ax\le b, Bx<c$ has a solution $x$;
		\item For all vectors $y\ge 0, z\ge 0$,
		      $$A^Ty+B^Tz=0\Rightarrow b^Ty+c^Tz\ge 0$$

		      and

		      $$A^Ty+B^Tz=0, z\neq 0\Rightarrow b^Ty+c^Tz>0$$
	\end{enumerate}
\end{theorem}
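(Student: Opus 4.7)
The plan is to prove the equivalence by two separate arguments: a short direct verification for (1) $\Rightarrow$ (2), and a contrapositive reduction to linear-programming duality for (2) $\Rightarrow$ (1).

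For (1) $\Rightarrow$ (2), I would start from any $x$ satisfying $Ax \le b$ and $Bx < c$, together with arbitrary $y \ge 0$, $z \ge 0$ obeying $A^T y + B^T z = 0$. Taking the inner product with $x$ gives $0 = y^T(Ax) + z^T(Bx) \le y^T b + z^T c$, where the inequality uses coordinate-wise non-negativity of the multipliers against $Ax - b \le 0$ and $Bx - c < 0$. For the sharper claim under $z \ne 0$, at least one coordinate of $z$ is strictly positive and each coordinate of $c - Bx$ is strictly positive, so $z^T(c - Bx) > 0$ and the estimate becomes strict.

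For (2) $\Rightarrow$ (1), I would work by contrapositive and split on feasibility of the ``outer'' system. In Case~1, $\{x : Ax \le b\} = \emptyset$, and Farkas' lemma immediately supplies $y \ge 0$ with $A^T y = 0$, $b^T y < 0$; choosing $z = 0$ gives a witness that the first implication of (2) fails. In Case~2, $\{x : Ax \le b\} \ne \emptyset$ but no point in it satisfies $Bx < c$. I would then form the auxiliary LP $\min\, t$ subject to $Ax \le b$ and $Bx - t\,\mathbf{1} \le c$ with $x, t$ free; by hypothesis its optimal value is $\ge 0$. The Lagrangian is
\[
 t + y^T(Ax - b) + z^T(Bx - t\,\mathbf{1} - c) = t\,(1 - \mathbf{1}^T z) + x^T(A^T y + B^T z) - b^T y - c^T z,
\]
so finiteness of the dual forces $A^T y + B^T z = 0$ and $\mathbf{1}^T z = 1$. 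Strong LP duality then yields $y \ge 0$, $z \ge 0$ meeting those equations (hence $z \ne 0$) with $b^T y + c^T z \le 0$, which is precisely a violation of the second implication in (2).

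The main technical point is the application of strong LP duality in Case~2: one must confirm that the primal is feasible (guaranteed since Case~1 has been excluded) and bounded below (the hypothesis forces the optimum to be $\ge 0$), so that the duality gap is zero and a dual optimum is attained. With that in hand, the argument is driven by two bookkeeping observations: the coefficient of the free variable $t$ in the Lagrangian forces $\mathbf{1}^T z = 1$, which is exactly what guarantees $z \ne 0$ in the produced certificate, and the coefficient of $x$ forces $A^T y + B^T z = 0$. Farkas' lemma used in Case~1 can itself be viewed as a degenerate instance of the same duality, so the whole proof reduces the Motzkin theorem to one invocation of the standard theorem of alternatives together with one auxiliary LP.
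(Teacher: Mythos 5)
Your proof is correct, but note that the paper itself offers no proof of Theorem~\ref{MTT}: it is quoted verbatim as a known result from the cited reference and used only as the starting point for Proposition~\ref{MTTC}, so there is no ``paper proof'' to match against. What you have written is a valid self-contained derivation. The easy direction is exactly the multiplier argument the paper later reuses in its proof of Proposition~\ref{MTTC} (pair $x$ against $A^Ty+B^Tz=0$ and use sign information), and your observation that $z\neq 0$ together with $c-Bx>0$ componentwise forces strictness is right. The converse via the contrapositive is the standard reduction to a theorem of alternatives: Case~1 ($Ax\le b$ infeasible) correctly produces a certificate with $z=0$ violating the first implication, and Case~2 correctly produces one with $\mathbf{1}^Tz=1$, hence $z\neq 0$, violating the second. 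The two technical points you flag are indeed the only ones that need care, and both check out: the auxiliary LP $\min t$ is feasible (take any $x$ with $Ax\le b$ and $t$ large) and its infimum is $\ge 0$ precisely because a feasible point with $t<0$ would give $Bx\le c+t\mathbf{1}<c$; since a feasible, bounded LP attains its optimum and has zero duality gap, the dual optimum is attained with $A^Ty+B^Tz=0$, $\mathbf{1}^Tz=1$ and $-b^Ty-c^Tz\ge 0$. The only stylistic caveat is that you invoke Farkas' lemma and strong LP duality as black boxes, which is fine for a result of this kind but means the argument is not more elementary than the classical proofs; it does, however, make transparent why the two implications in statement~(2) split according to whether the certificate has $z=0$ or $z\neq 0$, which is obscured in the paper's presentation.
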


The present paper uses the following variation of Theorem~\ref{MTT}.
\begin{proposition}[MTTC]\label{MTTC}
	Given the matrices $A$, vectors $b,d$ and a number $c$, only one of the following two statements holds.
	\begin{enumerate}
		\item The system of equations $Ax\le b, d^Tx\le c$ has a solution $x$;
		\item The system of equations
		      \begin{eqnarray}\label{mtt_c}
			      y\ge 0,~~z\ge 0,~~A^Ty+z\cdot d=0,~~b^Ty+c\cdot z< 0
		      \end{eqnarray}
	\end{enumerate}
	has a solution $(y,z)$.
\end{proposition}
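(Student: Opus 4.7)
My plan is to prove the stated dichotomy in two steps: first show that statements (1) and (2) cannot both hold, and then show that at least one must hold by reducing the second direction to the Motzkin Transposition Theorem (Theorem~\ref{MTT}) via a homogenization argument.

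The easy direction is a standard weak-duality manipulation. Suppose $x$ satisfies (1) and $(y,z)$ satisfies (2). Premultiplying $Ax\le b$ by the nonnegative row vector $y^{T}$ and $d^{T}x\le c$ by the nonnegative scalar $z$, and summing, yields
\[
(A^{T}y + z\,d)^{T}x \;\le\; b^{T}y + c\,z.
\]
The left-hand side vanishes by the equality $A^{T}y+z\,d=0$ in (2), while the right-hand side is strictly negative, producing the contradiction $0<0$.

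For the converse, I would recast the infeasibility of (1) as the infeasibility of an equivalent homogeneous system to which Theorem~\ref{MTT} directly applies. Introduce a scalar $t$ and consider the enlarged system in $(x,t)$:
\[
A x - b\,t \le 0,\qquad d^{T}x - c\,t \le 0,\qquad -t < 0.
\]
If $x^{\star}$ solves (1), then $(x^{\star},1)$ solves this system; conversely, any solution $(x,t)$ of the enlarged system has $t>0$ and $x/t$ solves (1). Hence (1) is infeasible precisely when the homogenized system is infeasible. I would then apply Theorem~\ref{MTT} with the ``$\le$'' block $\tilde A=\bigl[\begin{smallmatrix}A & -b\\ d^{T} & -c\end{smallmatrix}\bigr]$, $\tilde b = 0$, and the ``$<$'' block $\tilde B=[\,0,\ldots,0,-1\,]$, $\tilde c=0$. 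Writing the MTT dual multipliers as $\eta\ge 0$ (for the $A$-rows), $\mu\ge 0$ (for the $d^{T}$-row), and $\zeta\ge 0$ (for the strict inequality), the dual right-hand side $\tilde b^{T}(\eta,\mu)+\tilde c\,\zeta$ is identically zero, so the ``$b^{T}y + c^{T}z < 0$'' branch of the MTT alternative cannot fire. Infeasibility must therefore come from the ``$z\ne 0$'' branch of MTT, which forces $\zeta>0$ together with
\[
A^{T}\eta + \mu\,d = 0,\qquad -b^{T}\eta - c\,\mu - \zeta = 0.
\]
Eliminating $\zeta$ produces $b^{T}\eta + c\,\mu = -\zeta < 0$, which is precisely statement (2) upon relabelling $(\eta,\mu)\mapsto(y,z)$.

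The main obstacle is the careful bookkeeping in unpacking the negation of Theorem~\ref{MTT}: one must verify that the homogeneous right-hand side ($\tilde b=0,\tilde c=0$) rules out the non-strict branch of the alternative and activates the strict (``$z\ne 0$'') branch, since it is exactly this branch that supplies the strict inequality $b^{T}y+c\,z<0$ required in MTTC's statement~(2). Everything else is routine linear algebra on the block structure of $\tilde A$ and $\tilde B$.
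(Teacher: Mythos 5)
Your proof is correct, but it follows a genuinely different route from the paper's. The paper argues directly with slack variables: it sets $Ax+s=b$, $d^Tx+r=c$ and tries to establish, by multiplying the dual equality $A^Ty+z\,d=0$ by $x^T$, the equivalence between feasibility of the primal system and the universally quantified implication $A^Ty+z\,d=0\Rightarrow b^Ty+c\,z\ge 0$; the strict-inequality system \eqref{mtt_c} is then just the negation of that implication. The half you handle by homogenization is, in the paper, the step from the certificate condition back to primal feasibility, and the paper's one-line argument there --- concluding $s\ge 0$ and $r\ge 0$ from $s^Ty+r\,z=b^Ty+c\,z\ge 0$ ``for any $y\ge 0$ and $z\ge 0$'' --- is the weak link: that inequality is only available for pairs $(y,z)$ lying on the hyperplane $A^Ty+z\,d=0$, not for arbitrary nonnegative $(y,z)$, and the $x$ it manipulates is never exhibited; this is the hard (completeness) direction of any Farkas-type theorem and cannot be obtained by such a manipulation alone. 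You sidestep this entirely by appending the homogenizing variable $t$ with the strict inequality $-t<0$ and invoking Theorem~\ref{MTT} as a black box: the zero right-hand side correctly disables the non-strict branch of the alternative, and the strict branch forces $\zeta>0$, which after eliminating $\zeta$ yields exactly the certificate in statement (2); together with your weak-duality argument for mutual exclusivity, the dichotomy is complete. Your route buys rigour by inheriting it from the already-stated MTT at the cost of some block-matrix bookkeeping, whereas the paper's route is shorter and self-contained in spirit but, as written, has a gap precisely where your argument is solid.
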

\begin{proof}
    Firstly, we prove that the following two statements are equivalent to each other.
	\begin{enumerate}[(i)]
		\item The system $Ax\le b, d^Tx\le c$ has a solution $x$;
		\item For all vectors $y\ge 0$ and a number $z\ge 0$,
		      $$A^Ty+z\cdot d=0\Rightarrow b^Ty+c\cdot z\ge 0$$
	\end{enumerate}
 
	Assuming $Ax+s=b$ and $d^Tx+r=c$, we have
	\begin{itemize}
		\item From (i) to (ii). We now have $s\ge 0$ and $r\ge 0$. Multiplying $x^T$ on both sides of $A^Ty+z\cdot d=0$ leads to $(Ax)^Ty+(d^Tx)\cdot z=(b-s)^Ty+(c-r)\cdot z=0$, i.e. $b^Ty+c\cdot z=s^Ty+r\cdot z$. Together with $y\ge 0$ and $z\ge 0$, we have $b^Ty+c\cdot z\ge 0$.
		\item From (ii) to (i). Similarly, by multiplying $x^T$ on both sides of $A^Ty+z\cdot d=0$ leads to $(Ax)^Ty+(d^Tx)\cdot z=(b-s)^Ty+(c-r)\cdot z=0$. It implies that $s^Ty+r\cdot z=b^Ty+c\cdot z\ge 0$, which holds for any $y\ge 0$ and $z\ge 0$. Then there must be $s\ge 0$ and $r\ge 0$, i.e. $Ax\le b$ and $d^Tx\le c$.
	\end{itemize} 

    From the equivalence of the above statements and noting that 
    \begin{eqnarray}\label{mtt_c_new}
		y\ge 0,~~z\ge 0,~~A^Ty+z\cdot d=0,~~b^Ty+c\cdot z< 0\nonumber
	\end{eqnarray}
    is infeasible if  (ii) holds, the proposition thus can be proved. 
\end{proof}

Based on Proposition \ref{MTTC}, we have the following proposition, along with its proof, describing the mathematical formulation for the case when a polyhedron is a subset of another polyhedron. 
\begin{proposition}\label{MTT_subset}
	For two polyhedrons $P=\{y|Ey\le f\}$ and $R=\{y|Gy\le g\}$, $P\subseteq R$ is equivalent to the following mathematical formulations. 
	\begin{eqnarray}
		E^Tx_{:,i}=-G_{i,:}^T,~x\le 0,~-f^Tx_{:,i}\le g_i~~\forall i
	\end{eqnarray}
	where $G_{i,:}$ represents the $i-th$ row of $G$, $g_i$ is the $i-th$ element of $g$ and $x$ is matrix variable with $x_{:,i}$ representing its $i-th$ column.
\end{proposition}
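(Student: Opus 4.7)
The plan is to reduce the inclusion $P\subseteq R$ to $u$ scalar row-wise implications and then certify each one via Proposition~\ref{MTTC}. Since $P\subseteq R$ is equivalent to $P\subseteq\{y\mid G_{i,:}y\le g_i\}$ holding for every row index $i$, and since the condition stated on $x$ decouples across its columns $x_{:,i}$, it suffices to fix a row $i$ and prove the equivalence between (a) $\{y\mid Ey\le f\}\subseteq\{y\mid G_{i,:}y\le g_i\}$ and (b) the existence of $x_{:,i}\le 0$ with $E^T x_{:,i}=-G_{i,:}^T$ and $-f^T x_{:,i}\le g_i$.

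For the sufficiency direction (b)~$\Rightarrow$~(a), I would give a one-line dual-certificate verification: given $x_{:,i}\le 0$ satisfying (b) and any $y$ with $Ey\le f$, premultiplying the inequality by $x_{:,i}^T\le 0$ reverses it to yield $x_{:,i}^T Ey \ge x_{:,i}^T f$. Substituting $E^T x_{:,i}=-G_{i,:}^T$ rewrites the left-hand side as $-G_{i,:}y$, so $G_{i,:}y \le -f^T x_{:,i} \le g_i$, and hence $y\in R$. This half is essentially a dual-certificate check and uses none of MTTC.

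For the necessity direction (a)~$\Rightarrow$~(b), I would apply Proposition~\ref{MTTC} with the identifications $A=E$, $b=f$, $d=-G_{i,:}^T$, and $c=-g_i-\varepsilon$ for an arbitrary $\varepsilon>0$. Under (a), the system $Ey\le f$, $-G_{i,:}y\le -g_i-\varepsilon$ has no solution, so MTTC produces multipliers $\mu_\varepsilon\ge 0$ and $z_\varepsilon\ge 0$ satisfying $E^T\mu_\varepsilon = z_\varepsilon G_{i,:}^T$ and $f^T\mu_\varepsilon < (g_i+\varepsilon)z_\varepsilon$. Assuming $P\neq\emptyset$ (the operationally relevant case, since $P$ is the identified DFR), the degenerate case $z_\varepsilon=0$ is excluded because it would provide a Farkas certificate of emptiness of $P$; rescaling by $z_\varepsilon$ yields $\lambda_\varepsilon := \mu_\varepsilon/z_\varepsilon \ge 0$ with $E^T\lambda_\varepsilon = G_{i,:}^T$ and $f^T\lambda_\varepsilon < g_i+\varepsilon$. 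Letting $\varepsilon\to 0^+$ on the closed polyhedron $\{\lambda\ge 0 : E^T\lambda = G_{i,:}^T\}$, either the minimum of $f^T\lambda$ is attained at some $\lambda^*$ with $f^T\lambda^*\le g_i$, or $f^T\lambda\to -\infty$ along a recession direction; in either case some $\lambda\ge 0$ with $f^T\lambda\le g_i$ exists, and setting $x_{:,i}:=-\lambda$ completes the proof. The principal obstacle I anticipate is precisely this $\varepsilon\to 0$ closing argument together with the handling of the degenerate cases ($z_\varepsilon=0$ or unbounded dual); both are side-stepped cleanly under the blanket assumption $P\neq\emptyset$, which is natural in this paper. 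A self-contained alternative would bypass the strict-inequality/limit step by invoking strong LP duality directly on $\max\{G_{i,:}y : Ey\le f\}\le g_i$, but routing through MTTC is stylistically consistent with the appendix.
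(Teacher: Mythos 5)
Your proposal is correct, and it reaches the certificate by a noticeably different route than the paper, even though both arguments share the same first move (reducing $P\subseteq R$ to the $u$ row-wise conditions $P\cap\{y\mid G_{i,:}y>g_i\}=\emptyset$) and both ultimately lean on Proposition~\ref{MTTC}. The paper handles each row by rewriting the system $Ey\le f$, $e^Ty>h$ in nonnegative variables via the splitting $y=y^{+}-y^{-}$ plus slacks, and then pattern-matches the resulting system directly onto the \emph{second} alternative of MTTC, so that its infeasibility is read off as the feasibility of the first alternative; no perturbation or limit is needed, and both directions of the equivalence come out of the single identification. You instead treat the two directions asymmetrically: sufficiency is a two-line dual-certificate verification that bypasses MTTC entirely (a cleaner and more transparent argument for that half than the paper's), while necessity replaces the strict inequality $G_{i,:}y>g_i$ by the weak $\varepsilon$-tightened one, invokes MTTC in the opposite direction, normalises by $z_\varepsilon$, and closes with an $\varepsilon\to 0^{+}$ attainment argument on the dual polyhedron. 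What your route buys is an explicit and honest treatment of the degenerate cases: you correctly observe that $z_\varepsilon=0$ is a Farkas certificate of $P=\emptyset$ and therefore state the blanket assumption $P\neq\emptyset$. That assumption is in fact necessary for the proposition as stated --- if $P=\emptyset$ the inclusion holds vacuously while the system $E^Tx_{:,i}=-G_{i,:}^{T}$, $x_{:,i}\le 0$ can be infeasible (e.g.\ $E=\bigl(\begin{smallmatrix}1&0\\-1&0\end{smallmatrix}\bigr)$, $f=(-1,-1)^{T}$, $G_{i,:}=(0,1)$) --- a caveat the paper's proof glosses over. The price you pay is the extra machinery of the perturbation, normalisation, and limit, which the paper's direct structural identification avoids.
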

\begin{proof}
	We firstly present how to formulate $P \cap Q=\emptyset$, where $Q=\{y|e^Ty>h\}$ is a set containing only one inequality expression.

	For $Ey\le f$ and $e^Ty>h$, we have the following equivalent expressions.
	\begin{subequations}
		\begin{eqnarray}\label{mtt_01}
			\left[E,~-E,~I_s\right]\left[\begin{matrix}y^+\\y^-\\s\end{matrix}\right] + 1\cdot (-f)=0, ~~
			\left[\begin{matrix}y^+\\y^-\\s\end{matrix}\right]\ge 0\\
			\left[-e^T,~e^T,~0_s^T\right]\left[\begin{matrix}y^+\\y^-\\s\end{matrix}\right]+h<0
		\end{eqnarray}
	\end{subequations}
	where $y=y^+-y^-$ and $s$ is an auxiliary vector variable.
	
	Denoting $\widetilde{E}=\left[E,~-E,~I_s\right]$, $\widetilde{y}=\left[y^+,~y^-,~s\right]^T$ and $\widetilde{e}=\left[-e^T,~e^T,~0_s^T\right]^T$, \eqref{mtt_01} is equivalent to
	\begin{eqnarray}\label{mtt_02}
		\widetilde{y}\ge 0, ~~\widetilde{E}\widetilde{y}+1\cdot (-f)=0, ~~\widetilde{e}^T\widetilde{y}+1\cdot h<0
	\end{eqnarray}
	
	Comparing with \eqref{mtt_c}, we have the mappings:
	\begin{eqnarray}\label{mtt_mapping}
		\widetilde{y}\ge 0\Leftrightarrow y\ge 0, ~
		1\ge 0\Leftrightarrow z\ge 0, ~
		\widetilde{E}^T\Leftrightarrow A\nonumber\\
		-f\Leftrightarrow d, ~
		\widetilde{e}\Leftrightarrow b, ~
		c\Leftrightarrow h\nonumber
	\end{eqnarray}
	
	Then, \eqref{mtt_02} being infeasible, i.e. $P\cap Q=\emptyset$, is equivalent to \eqref{mtt_eqv_b} being feasible.
	\begin{subequations}\label{mtt_eqv}
		\begin{eqnarray}
			\widetilde{E}^Tw \le \widetilde{e},~~-f^Tw\le h\nonumber\\
			\label{mtt_eqv_b}
			\Leftrightarrow E^Tw=-e,~w\le 0,~-f^Tw\le h
		\end{eqnarray}
	\end{subequations}
    where $w$ is a vector variable.

	Noting that $P\subseteq R$ is equivalent to $P\cap \bar R=\emptyset$, where $\bar R=\cup_i\bar R_i$ and $\bar R_i=\{y|G_{i,:}y> g_i\}$, $P\subseteq R$ can be formulated as
	\begin{eqnarray}
		E^Tx_{:,i}=-G_{i,:}^T,~x\le 0,~-f^Tx_{:,i}\le g_i~~\forall i
	\end{eqnarray}
	which proves the proposition. 
\end{proof}

Moreover, for the Proposition \ref{MTT_subset}, if $E\in \mathbb{R}^{m\times n}$ and $G\in \mathbb{R}^{u\times n}$, then $x\in \mathbb{R}^{m\times 1}, f\in\mathbb{R}^{m\times 1}$, and $P\subseteq R$ leads to $u(n+m+1)$ linear expressions or constraints.

\color{black}
\subsection{Discussions on RDOEs based on exact UTOPF formulation}\label{appendix_roe}
In this case, the FR could be formulated as
\begin{equation}
      \mathcal{F}_N(q)
    \label{fr-03}
    =\left\{p\left\vert\begin{matrix}
        Ap+Bq+Ct(v)=d                 \\
        g(v)\le f                    \\
    \end{matrix}\right.\right.
\end{equation}
where both $t(v)$ and $g(v)$ are non-convex functions of $v$.

Since $\mathcal{F}_N$ cannot be expressed as a polyhedron, seeking the maximum ellipsoid with controllable $q$ can now be formulated as 
\begin{subequations}\small\label{max_ellip_nvx}
\begin{eqnarray}
    \label{max_ellip_obj_nvx}
    \max_{L,u_c,q}{\log(\det (L))}\\
    \label{max_ellip_cons_nvx}
    s.t.~~\exists v \Rightarrow \left\{\begin{matrix}
        A(Lu+u_c)+Bq+Ct(v)=d                 \\
        g_i(v)\le f_i~~\forall i                    \\
    \end{matrix}\right.,~\forall ||u||_2\le 1
    \end{eqnarray}
\end{subequations}
and the constraint, treating $q,u_c$ and $L$ as \emph{constants} for now ($q,u_c$ and $L$ are still variables to be optimised seen from the whole optimisation problem), is equivalent to 
\begin{subequations}\label{max_ellip_nvx_01}
    \begin{eqnarray}
    \label{max_ellip_nvx_01_obj}
    \max\nolimits_{u} \min\nolimits_v g_i(v)\le f_i\\
    \label{max_ellip_nvx_01_cons_01}
    s.t.~~A(Lu+u_c)+Bq+Ct(v)=d~~(\alpha_i)\\
    ||u||_2\le 1~~(\beta_i\ge 0)
\end{eqnarray}  
\end{subequations}
where $\alpha_i$ and $\beta_i$ are the Lagrange multipliers for each constraint. 

Nothing that the \emph{min} operator in \eqref{max_ellip_nvx_01_obj} is based on the assumption that there might be multiple solutions for UTPF \cite{multi_solution}. To derive a deterministic formulation of \eqref{max_ellip_nvx_01}, thus making the optimisation problem \eqref{max_ellip_obj_nvx} solvable, we need to remove both the \emph{max} and \emph{min} operators in \eqref{max_ellip_nvx_01_obj}. Generally, removing the \emph{min} operator is challenging and, however, it can be naturally removed if we assume that $v$ is uniquely determined by \eqref{max_ellip_nvx_01_cons_01}. Next, we will show how to remove the \emph{max} operator in \eqref{max_ellip_nvx_01} under such an assumption based on duality theory in order to derive a deterministic formulation.

The Lagrange function of optimisation problem \eqref{max_ellip_nvx_01} (excluding ``$\le f_i$") is 
{\small\begin{eqnarray}
    L(u,v,\alpha_i,\beta_i)=g_i(v)+\alpha_i^T(A(Lu+u_c)+Bq+Ct(v)-d)\nonumber\\-\beta_i(||u||_2-1)\nonumber\\
    =\alpha_i^T(Au_c+Bq-d)+\beta_i+g_i(v)+\alpha_i^TCt(v)+\alpha_i^TALu-\beta_i||u||_2\nonumber
\end{eqnarray}}
and we have 
\begin{subequations}\small
    \begin{eqnarray}
        g_i(v)\le \min_{(\alpha_i,\beta_i\ge 0)}\max_{u}L(u,v,\alpha_i,\beta_i)\\
        =\left\{\begin{matrix}\min_{(\alpha_i,\beta_i)}\left(\alpha_i^T(Au_c+Bq-d)+\beta_i+g_i(v)+\alpha_i^TCt(v)\right)\\
        ||L^TA^T\alpha_i||_2\le \beta_i\\
         \end{matrix}\right.
    \end{eqnarray}
\end{subequations}

By further removing the \emph{min} operator in the above formulation, \eqref{max_ellip_nvx} can be approximately reformulated as
\begin{subequations}\small\label{max_ellip_nvx_approx}
\begin{eqnarray}
    \label{max_ellip_obj_nvx_approx}
    \max_{L,u_c,q,\alpha_i,\beta_i}{\log(\det (L))}\\
    \label{max_ellip_cons_nvx_approx}
    s.t.~~\alpha_i^T(Au_c+Bq-d)+\beta_i+g_i(v)+\alpha_i^TCt(v)\le f_i~\forall i\\
    ||L^TA^T\alpha_i||_2\le \beta_i~\forall i
    \end{eqnarray}
\end{subequations}
which, compared with the case when a linear UTOPF model is used, is with much higher complexity and includes the strongly non-convex terms $\alpha_i^TAu_c$, $\alpha_i^T Bq$, $g_i(v)$, $\alpha_i^TCt(v)$ and $L^TA^T\alpha_i$.

Although the non-convex terms may be dealt with by \texttt{Ipopt} or other nonlinear solvers, the further introduced non-convexity and the assumptions/approximations underlying \eqref{max_ellip_nvx_approx} may substantially increase the computational complexity and undermine the robustness of DOEs we want to achieve.

\color{black}

\bibliography{REFs_Power_Grid}

\begin{IEEEbiography}[{\includegraphics[width=1.1in,clip,keepaspectratio]{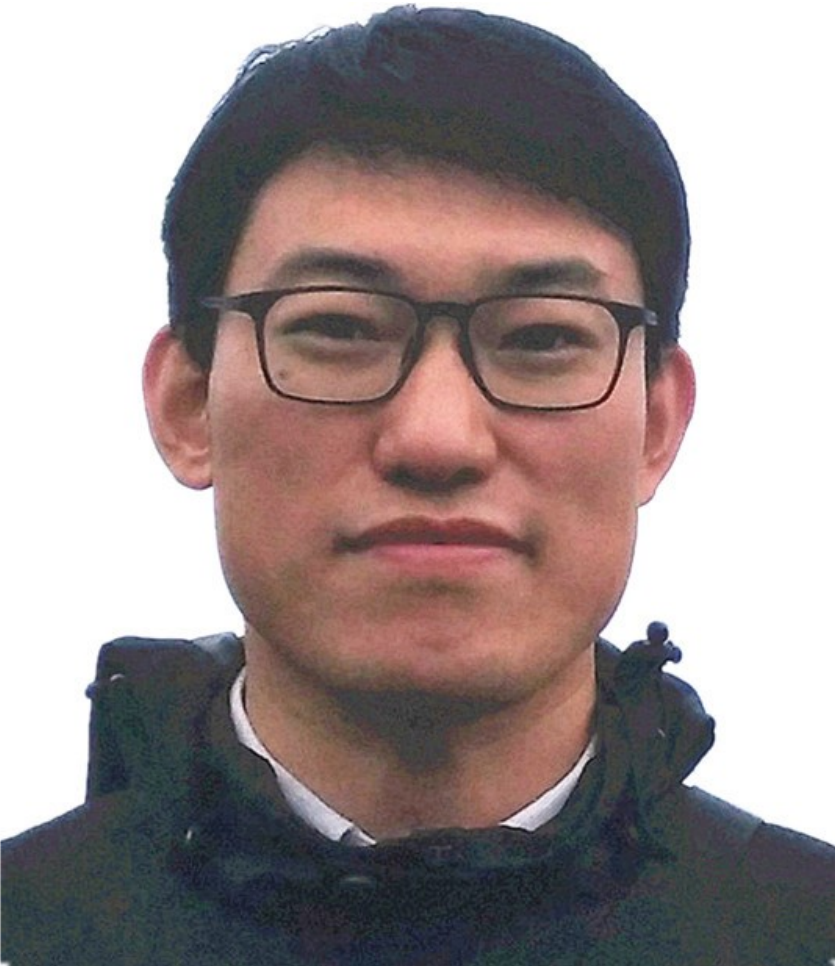}}]{Bin Liu} (M'2019)
	received his Bachelor, Master and PhD degrees, all in Electrical Engineering, from Wuhan University, Wuhan, China, China Electric Power Research Institute, Beijing, China and Tsinghua University, Beijing, China, in 2009, 2012 and 2015, respectively. He is currently a Senior Power System Engineer in Network Planning Division, Transgrid, Sydney, NSW, Australia. Before joining Transgrid, he had held research or engineering positions with Energy Systems Program, Commonwealth Scientific and Industrial Research Organisation (CSIRO), Newcastle, NSW, Australia, The University of New South Wales, Sydney, NSW, Australia, the State Grid, Beijing, China, and The Hong Kong Polytechnic University, Hong Kong. His current research interests include power system modelling, analysis and planning, optimisation theory applications in power and energy sector, and the integration of renewable energy, including distributed energy resources (DERs).
\end{IEEEbiography}

\begin{IEEEbiography}[{\includegraphics[width=1.1in,clip,keepaspectratio]{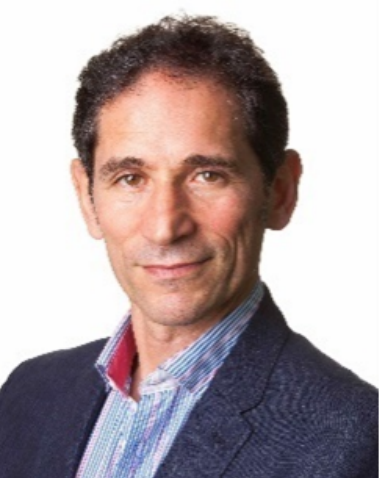}}]{Julio H. Braslavsky}(M'2000, SM'2013)
received his PhD in Electrical Engineering from the University of Newcastle NSW, Australia in 1996, and his Electronics Engineer degree from the National University of Rosario, Argentina in 1989. He is a Principal Research Scientist with the Energy Systems Program of the Australian Commonwealth Scientific and Industrial Research Organisation (CSIRO) and an Adjunct Senior Lecturer with The University of Newcastle, NSW, Australia. He has held research appointments with the University of Newcastle, the Argentinian National Research Council (CONICET), the University of California at Santa Barbara, and the Catholic University of Louvain-la-Neuve in Belgium. His current research interests include modelling and control of flexible electric loads and integration of distributed power-electronics-based energy resources in power systems. He is Senior Editor for IEEE Transactions on Control Systems Technology.
\end{IEEEbiography}

\end{document}